\newtheorem{theorem}{Theorem}
\newtheorem{lemma}[theorem]{Lemma}
\newtheorem{question}{Question}
\theoremstyle{definition}
\newcommand{\ind}[1]{\mathbf 1 \{#1\} }
\newcommand{\f}{\frac}
\def\E{\mathbb{E}}
\def\P{\mathbb{P}}
\def\V{\mathbb{V}}
\newcommand{\T}{\mathcal T}
\newcommand{\Ts}{\mathcal T^*}
\def\root{\mathbf{0}}
\newcommand{\0}{\mathbf{0}}
\def\Aut{\textup{Aut}}
\def\car{\mathfrak{a}}
\def\spot{\mathfrak{b}}
\renewcommand{\a}{\car}
\renewcommand{\b}{\spot}
\newcommand{\lrl}{\longleftrightarrow}
\newcommand{\lrljk}{\overset{j,k}\longleftrightarrow}
\newcommand{\nlrl}{\centernot \lrl}
\newcommand{\so}{\spot_\0}
\newcommand{\co}{\car_\0}
\newcommand{\Eso}{\E[S(\so)]}
\newcommand{\Eco}{\E[S(\co)]}
\newcommand{\zo}{\zeta_t^{\0}}
\renewcommand{\emptyset}{\varnothing}
\newcommand{\OR}{\texttt{OR}}
\newcommand{\XOR}{\texttt{XOR}}
\title{Diffusion-limited annihilating-coalescing systems}
	\author[S.~Ahn]{Sungwon Ahn}
	\email{\texttt{sahn02@roosevelt.edu}}
 \author[M.~Junge]{Matthew Junge}
\email{\texttt{Matthew.Junge@baruch.cuny.edu}}
	\author[H.~Lyu]{Hanbaek Lyu}
	\email{\texttt{hlyu@math.wisc.edu}}
 	\author[L.~Reeves]{Lily Reeves}
	\email{\texttt{zw477@cornell.edu}}
	\author[J.~Richey]{Jacob Richey}
	\email{\texttt{jfrichey@math.ubc.ca}}
	\author[D.~Sivakoff]{David Sivakoff}
	\email{\texttt{dsivakoff@stat.osu.edu}}
	\thanks{Part of this research was completed during the 2019 AMS Mathematical Research Community in Stochastic Spatial Systems. Junge was partially supported by NSF grant DMS-2115936. Lyu was partially supported by DMS-2206296 and DMS-2010035. Lyu and Sivakoff were partially supported by the NSF grant CCF--1740761.}
\begin{document}
\maketitle

	\begin{abstract}
We study a family of interacting particle systems with annihilating and coalescing reactions. Two types of particles are interspersed throughout a transitive unimodular graph. Both types diffuse as simple random walks with possibly different jump rates. Upon colliding, like particles coalesce up to some cap and unlike particles annihilate. We describe a phase transition as the initial particle density is varied and provide estimates for the expected occupation time of the root. For the symmetric setting with no cap on coalescence, we prove that the limiting occupation probability of the root is asymptotic to $2/3$ the occupation probability for classical coalescing random walk. This addresses an open problem from Stephenson.
	\end{abstract}

	\maketitle

	\section{Introduction}
	\label{sec:intro}
	
	Coalescing and annihilating random walk were among the first interacting particle systems to be rigorously studied \cite{griffeath, bramson1980asymptotics, arratia}. Both processes start with particles placed throughout the integer lattice $\mathbb Z^d$. These particles simultaneously perform independent continuous time simple random walk. In \emph{coalescing random walk}, when two particles meet, they coalesce. In \emph{annihilating random walk}, they annihilate. The main interest was understanding the limiting density of particles. See \eqref{eq:crwa} for these asymptotics.
 
    Two-type \emph{diffusion-limited annihilating systems} (DLAS) were investigated by Bramson and Lebowitz \cite{bramson1991asymptotic}. They considered systems in which $A$- and $B$-particles are initially distributed throughout $\mathbb Z^d$ according to independent Poisson fields. Both particle types move as independent continuous time simple random walks with the same jump rate. When particles of opposite type meet, they mutually annihilate. The one-to-one nature of annihilating reactions makes such systems critical when the two Poisson fields have the same intensity. Bramson and Lebowitz settled conflicting predictions from the physics community by working out the exponents of the limiting particle density for the sub-critical and critical regimes. These quantities were found to exhibit anomalous non-mean-field behavior in dimension $d \leq 3$ \cite{bramson1991asymptotic, bramson1991spatial, lee1995renormalization}. For example, at criticality the probability a particle is at the origin at time $t$ was found to be on the order of $t^{-d/4}$ for $d \leq 3$ and $t^{-1}$ for $d \geq 4$ \cite{bramson1991asymptotic}.

     Many of the arguments used by Bramson and Lebowitz depended on particles jumping at the same speed. There has been a resurgence in interest in developing more robust tools for analyzing asymmetric variants. In \cite{cabezas2019recurrence}, Cabezas, Rolla, and Sidoravicius studied DLAS in which $A$- and $B$-particles may have different jump rates. They prove that such systems on certain transitive unimodular graphs visit the root infinitely often and the probability a particle is at the origin at time $t$ is asymptotically at least $t^{-1}$.  Related results were proven in \cite{damron2019parking} for the \emph{parking process} in which $B$-particles are stationary (also considered in \cite{cabezas2019recurrence}). More in-depth studies of the particle density took place in \cite{parking_on_integers, johnson2020particle, damron2021stretched}. Significant progress has been made for the process with stationary $B$-particles on $\mathbb Z$. However, much remains unknown. For example, the exponent for the probability an $A$-particle is at the root at time $t$ is not known for any DLAS with asymmetric jump rates on $\mathbb Z^d$ for $d \geq 2$. Bahl, Barnet, Johnshon and Junge proved a general monotonicity condition for DLAS as the volatility of the initial configuration is increased \cite{bahl2022diffusion}.  There is also interest in asymmetric and coalescing ballistic annihilating systems \cite{junge2022phase, benitez2023three}.

    Stephenson studied a one-type model in which both annihilating and coalescing reactions may occur \cite{stephenson1999asymptotic}. The process takes place on $\mathbb Z^d$ with a particle initially at every site. Each particle follows an independent simple random walk. There is a threshold $n$, which is the minimum number of particles that must be at a site for a reaction to occur. When a particle moves to a site containing $i \geq n-1$ particles, all of the particles  mutually annihilate with probability $a_i$ and coalesce with probability $c_i$. Stephenson proved that the probability the origin is occupied at time $t$ is asymptotic to $t^{-1/(n-1)}$. For technical reasons, this result required the additional hypotheses that $n \geq 3$, $d \geq 2n +4$, and both $a_i$ and $a_i + c_i$ are increasing. For the model with no annihilation ($a_i=0$), Van den Berg and Kesten, over the course of two articles  \cite{van2000asymptotic,van2002randomly}, proved a more robust version of Stephenson's theorem that holds for all $n \geq 2$ and $d \geq 3$.

    Drawing inspiration from the work of Stephenson, Van den Berg, and Kesten, the goal of this work is to introduce coalescing reactions and thresholds to asymmetric DLAS. Informally speaking, our model features two particle types. Each site of a transitive unimodular graph has an $A$-particle with probability $p$, and otherwise a $B$-particle. $A$- and $B$-particles diffuse as simple random walks with possibly different jump rates. We allow particles of the same type to coalesce into clusters up to a given cap. When clusters of $A$- and $B$-particles collide, both are annihilated. Note that the our model is different than Stephenson's model because: (i) there are two particle types with possibly different jump rates, (ii) coalescing reactions occur up to, rather than beyond a threshold, and (iii) annihilation always occurs between unlike particles.
    
    This is a natural line of inquiry since coalescing random walk and annihilating random walk are closely related, but difficult to study together. Moreover, diffusion-limited interacting particles systems arose as models in physical chemistry \cite{chem1, chem2} for which coalescence is a natural reaction \cite{blythe2000stochastic}. The model is mathematically interesting, because, as we will see, the critical value and limiting particle density are difficult to infer when both annihilation and coalescence are present.

\subsection{Process Description}
	Fix a simple, locally finite, and connected graph $G = (\mathcal V,\mathcal E)$ with root $\0$. Each site $x \in \mathcal V$ initially contains $\xi_0(x)$ particles. If $\xi_0(x) >0$, then the particles are all of type $A_1$, and if $\xi_0(x) <0$, then the particles are of type $B_1$. 
 
 The site $x$ is associated with a tuple $(S^{x,j}, U^{x,j})_{j \in \mathbb Z}$. The $S^{x,j}$ are discrete simple random walk paths $S^{x,j} = (S_n^{x,j})_{n \geq 0}$ started at $x$ with Markov transition kernel $K$. So, $K$ is a function $K\colon\mathcal{V}\times \mathcal{V}\rightarrow [0,1]$ such that $K(u,v)=0$ if $(u,v)\notin \mathcal{E}$ and $\sum_{v\in N(u)} K(u,v)=1$ with $N(u)$ the set of vertices connected to $u$ by an edge.  
 The $U^{x,j}$ are independent \emph{braveries} sampled uniformly from $(0,1)$. The $j$th particle started at $x$ jumps along the vertices of $S^{x,j}$ according to a rate $\lambda_A>0$ exponential clock if it is an $A_i$-particle, and a rate $\lambda_B\geq 0$ clock if it is a $B_i$-particle for any $i \geq 1$. 

  To simplify the presentation, we consider initial configurations with either a single $A_1$-particle or single $B_1$-particle independently at each site. That is $(\xi_0(x))_{x \in \mathcal V}\in \{-1,1\}^{\mathcal V}$ is assigned according to a product measure with $\P(\xi_0(x) = 1)=p\in [0,1]$. So, our initial configuration has a $p$-density of $A_1$-particles and $1-p$ density of $B_1$-particles.

Fix \emph{caps} $ M,N \in [0,\infty]$. When multiple particles meet, the two particles with the highest bravery react. $A+B$ reactions result in mutual annihilation. When like particles meet they coalesce so long as both particles have not yet coalesced beyond the cap, otherwise, the particles do not interact and continue diffusing. These rules may be summarized as
\begin{equation} \label{eq:rules}
\begin{split}
    A_i + B_j &\to \varnothing \quad \forall i,j\geq 1, \\
    A_i + A_j &\to \begin{cases} A_{i+j},  &\max (i,j) \leq M \\  A_i\, A_j, & \text{else} \end{cases}\\
    B_i + B_j &\to \begin{cases} B_{i+j},  &\max (i,j) \leq N \\  B_i\, B_j, & \text{else}  \end{cases}.
\end{split}
\end{equation}

If multiple particles are present at the site, then the interactions are repeated pairwise giving priority to the highest bravery particles until no further interactions can occur at that instant. When a coalescing reaction occurs, the coalesced particle inherits the path and bravery of whichever of the two most recently coalesced particles has the highest bravery.
We refer to the index of coalesced particles as the \emph{size}. 
%
Call the interacting particle system with dynamics as at \eqref{eq:rules}  a \emph{diffusion-limited annihilating-coalescing system} (DLACS). We let $\xi_t^i = (\xi_t^i(x))_{x \in \V}$ denote the number of $A_i$ particles at $x$  at time $t$ for $i \geq 1$. Similarly, we set $\xi_t^{-i} = (\xi_t^{-i}(x))_{x \in \V}$ with $-\xi_t^{-i}(x)$ the number of $B_i$-particles at $x$ at time $t$. Set $\xi_t(x) = \sum_{i \in \mathbb Z} \xi_t^i(x)$. 

  \begin{figure}
   \centering 
   \includegraphics[height = 5.5 cm]{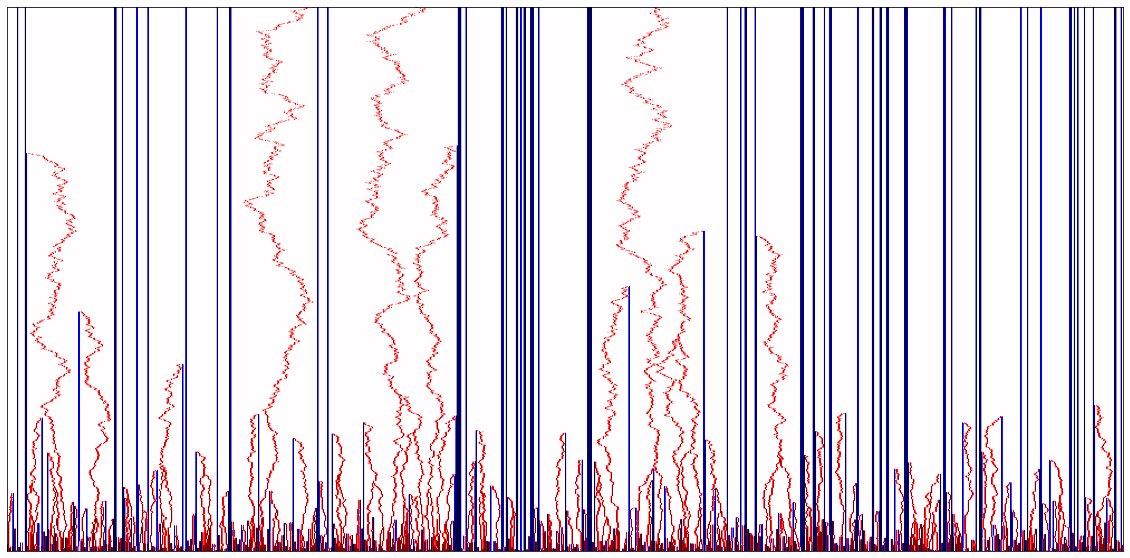}

   \includegraphics[height = 5.5 cm]{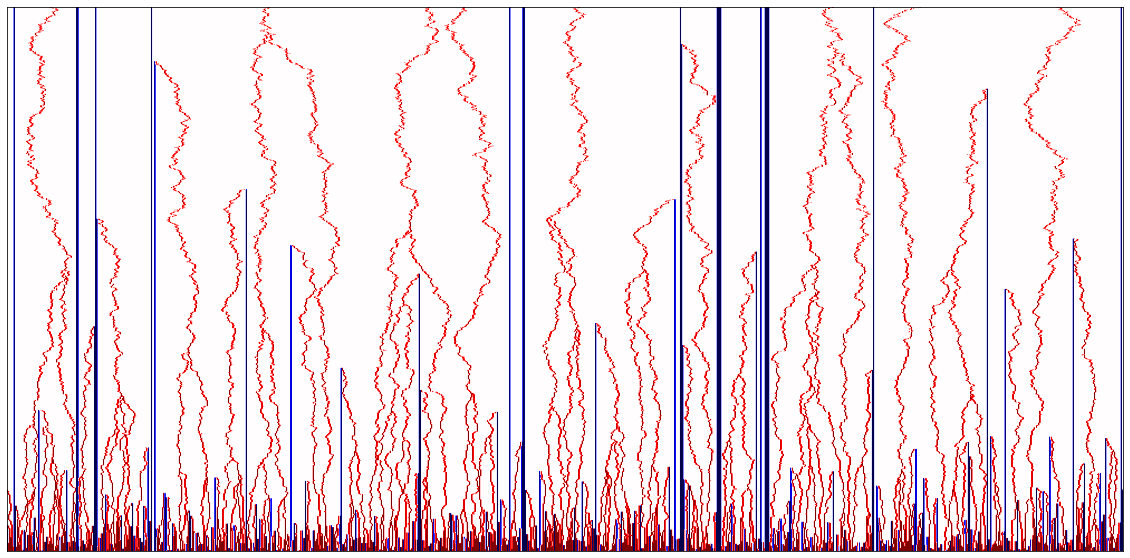}

   \includegraphics[height = 5.5 cm]{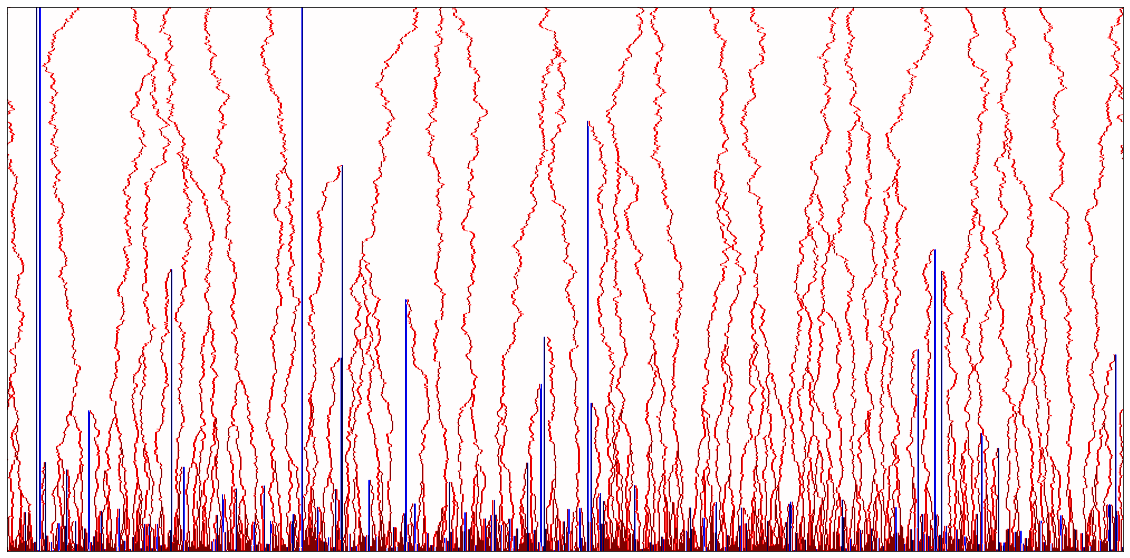}
   
   \caption{Simulations of DLACS with $\lambda_B=0$ and $M=\infty$ on a cycle with 2000 vertices. $A$-particles perform discrete time random walk for 2000 time steps. The top graphic has $p=.6$, the middle graphic has $p=.7$, and the bottom graphic has $p=.8$. Determining the critical value is an open problem.} \label{fig:sim}
\end{figure}
\subsection{Key quantities}

Viewing an $A_i$-particle as a mass of $i$-coalesced $A_1$-particles, we define
\begin{align}W_T(x) &= \int_0^T \sum_{i \geq 1} i\xi^i_t(x) \;dt \label{eq:WT}\end{align}
to be the \emph{weighted occupation time of $x$ by $A$-particles up to time $t$}. Instead, viewing each $A_i$-particle as a single particle, we define
\begin{align}
	V_T(x) &= \int_0^T \sum_{i \geq 1} \xi^i_t(x) \;dt \label{eq:VT}
\end{align}
to be the \emph{occupation time} of $x$ by $A$-particles up to time $t$.

We will denote the event that the vertex $u$ initially contains an $A_1$-particle by $\a_u$, and similarly for $\b_u$. In a convenient abuse of notation we will refer to the generic particle started at $u$ as  $\bullet_u$, and as $\a_u$ or $\b_u$ if its type is specified. We will view $A_i$-particles (and $B_i$-particles) as clusters of $i$ coalesced $A$-particles (or $i$ $B$-particles) that are following the path of the bravest particle in the cluster.  Define the events
\begin{align}
    \a_u \lrl \b_v &= \{\text{$\a_u$ and $\b_v$ mutually annihilate}\}\\
    \a_u \lrl \b &= \{\text{$\exists v$ such that $\a_u$ mutually annihilates with $\b_v$}\}\\
    \b_u \lrl \a &= \{\text{$\exists v$ such that $\b_u$ mutually annihilates with $\a_v$}\}.
\end{align}
We denote complements of these events with $\nlrl$. 

Let $\tau(\car_u)$ be the lifespan of an $A$-particle started at $u$, that is, the time at which the $A$-particle (and its coalesced cluster) mutually annihilates with a $B$-particle. Set $\tau(\car_u)=0$ if $u$ initially contains a $B$-particle, and $\tau(\car_u) = \infty$ if the $A$-particle at $u$ is never destroyed. Note that $\a_u$ is viewed as surviving in the event that it coalesces with another $A$-particle and begins following a different path. Let $S(\b_v)$ be the size of the particle that mutually annihilates with a $B$-particle at $v$. This is implicitly multiplied by $\ind{\b_v \lrl \a}$, so that $S(\b_v)=0$ if there is initially an $A$-particle at $v$, or if $\b_v \nlrl \car$. Define $S(\a_u)$ similarly, but for the size of the $B$-particle that collides with an $A$-particle initially at $u$. This lets us specify the sizes of particles at the time of mutual annihilation
\begin{align}
	\a_u \lrljk \b_v &= \{\text{$\a_u$ and $\b_v$ mutually annihilate}\} \cap \{S(\a_u) = k\} \cap \{S(\b_v) = j\}.
\end{align}

\subsection{Graph Topology}
We next describe conditions on the graphs for which our results apply. These conditions are necessary for a mass transport principle that we use to hold (\thref{lemma:mass_transport}). Similar hypotheses were used in \cite{cabezas2019recurrence, damron2019parking}. Canonical graphs such as integer lattices and regular trees satisfy these conditions. Denote by $\Aut(G)$ the group of all automorphisms of $G$. Let $\Aut_{K}(G)$ be the subgroup of $\Aut(G)$ consisting of all $K$-preserving automorphisms; that is 
\begin{equation*}
\Aut_{K}(G) = \{ \varphi\in \Aut(G) \colon K(u,v)=K(\varphi (u), \varphi (v) ) \quad \forall u,v\in \mathcal{V}  \}. 
\end{equation*}
Given a subgroup $\Gamma_{K}\le \Aut_{K}(G)$ of $K$-preserving automorphisms of $G$, for each $u,v\in \mathcal{V}$, denote $\Gamma_{K}(u,v)=\{ \varphi\in \Gamma_{K}\colon \varphi(u)=v \}$. We define the following conditions on the triple $(G,K,\Gamma_{K})$.
\begin{enumerate}[label = \arabic*.]
	\item (Transitivity)  $(G,K,\Gamma_{K})$ is \textit{transitive} if  $\Gamma_{K}(u,v)$ is nonempty for each $u,v\in \mathcal{V}$.
	\item (Unimodularity) $(G,K,\Gamma_{K})$ is \textit{unimodular} if for each $u,v\in \mathcal{V}$, 
	\begin{equation*}
		|\Gamma_{K}(u,u)v|=|\Gamma_{K}(v,v)u|<\infty.
	\end{equation*}
	\item (Infinite Accessibility)  For each $u, v \in \mathcal V$, we say that $u$ is accessible from $v$ if there exists a sequence $v = x_0, x_1, \hdots , x_n =u$ of adjacent nodes such that $\prod_{i=0}^{n-1} K(x_i
, x_i+1) > 0$. The triple $(G,K,\Gamma_{K})$ is \textit{infinitely accessible} if there exist $\varphi \in \Gamma_K$ and $u\in \mathcal{V}$ such that $\{\varphi^{n}(u)\colon n\ge 0\}$ is infinite and $u$ is accessible from $\varphi(u)$.
\end{enumerate}
Lastly, we say that $(G,K)$ has the \emph{random walk intersection property} if two independent random walks with jump rates $\lambda_B, \lambda_A >0$ collide almost surely regardless of their starting vertices.

\subsection{Results}
Note that all of the following theorems are for DLACS on a transitive, infinitely accessible, and unimodular graph.

We define the critical density as
	\begin{align}
	p_c &= p_c(\lambda_A, \lambda_B, M,N) := \sup\left \{ p \in [0,1] \colon  \f{p}{1-p} \f{ \E[S(\co) \mid \co \lrl \spot ] }{ \E[S(\so) \mid \so \lrl \car]} <1 \right\}.\label{eq:pc}
	\end{align}
 Intuitively, the quantity in the supremum for \eqref{eq:pc} balances the ratio of $A$- to $B$-particles with the ratio of the average size of $B$-particle and $A$-particle clusters when they are destroyed.

 Our first result establishes a phase transition for $A$-particle survival at $p=p_c$ for many cases. 
	

    


\begin{theorem}
    \thlabel{thm:main} 
Let $p_c$ be as defined at \eqref{eq:pc}. If either $N < \infty$, $\lambda_B=0$, or $(G,K)$ has the random walk intersection property, then
$$\P( \a_0 \lrl \b \mid \a_0) \begin{cases} =1, & p < p_c \\ <1, &p > p_c \end{cases}.$$
\end{theorem}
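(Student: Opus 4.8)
The plan is to derive the dichotomy from two ingredients: an exact conservation law for cluster sizes, obtained from the mass transport principle, and the structural fact that at least one of the two particle types is annihilated almost surely. Throughout, write $q_A := \P(\co \lrl \spot \mid \co)$ and $q_B := \P(\so \lrl \car \mid \so)$ for the probabilities that a tagged $A$- or $B$-particle is eventually destroyed; by transitivity these are independent of the tagged vertex. Set
\[
R(p) := \frac{p}{1-p}\cdot \frac{\E[S(\co)\mid \co \lrl \spot]}{\E[S(\so)\mid \so \lrl \car]},
\]
so that $p_c = \sup\{p : R(p) < 1\}$.

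\emph{Step 1: a size conservation law.} I would first prove $\Eco = \Eso$. The mechanism is that sizes balance in every annihilation: when an $A$-cluster of size $k$ annihilates a $B$-cluster of size $j$, each of the $k$ constituent $A_1$-particles $\car_u$ records $S(\car_u) = j$ and each of the $j$ constituent $B_1$-particles $\spot_v$ records $S(\spot_v) = k$, so that the two particle-wise totals over the event agree and equal $kj$. To promote this to an identity of expected values I define the transport $f(u,v)$ that sends one unit of mass from $u$ to $v$ precisely when the $A_1$-particle started at $u$ and the $B_1$-particle started at $v$ are destroyed in the same annihilation event. Then $\sum_v f(\0,v) = S(\co)$ and $\sum_u f(u,\0) = S(\so)$, and since the dynamics and the product initial law make $f$ diagonally $\Gamma_K$-invariant, the mass transport principle \thref{lemma:mass_transport} gives $\Eco = \Eso$. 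Expanding, $\Eco = p\,q_A\,\E[S(\co)\mid\co\lrl\spot]$ and $\Eso = (1-p)\,q_B\,\E[S(\so)\mid\so\lrl\car]$, so the conservation law reads
\[
q_A = q_B / R(p).
\]

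\emph{Step 2: at most one surviving type.} The key qualitative input is that $q_A = 1$ or $q_B = 1$. Suppose toward a contradiction that $q_A < 1$ and $q_B < 1$. Then by transitivity there is a positive density of $A$-particles and of $B$-particles that are never annihilated, each tracing a random-walk path for all time, and two such eternal survivors of opposite type must avoid one another forever. Each of the three hypotheses forbids this. Under the random walk intersection property the trajectories of a surviving $A$ and surviving $B$ collide almost surely, forcing an annihilation. When $\lambda_B = 0$ the $B$-particles are frozen, and a fixed vertex is visited by infinitely many of the density-$p$ field of moving $A$-walks (the Green's function summed over starting points diverges), so no $B$ can survive and in fact $q_B = 1$ outright. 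When $N < \infty$ the bounded $B$-cluster sizes prevent $B$-mass from concentrating and again guarantee that a surviving $A$ is eventually intercepted. I expect this step to be the main obstacle: converting ``positive density of survivors'' into a genuine opposite-type collision is the delicate point, and it is exactly here that the three hypotheses are needed.

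\emph{Step 3: assembling the dichotomy.} Combining $q_A = q_B/R(p)$ with $\max(q_A,q_B) = 1$ gives a clean equivalence. If $R(p) \le 1$, then either $q_B = 1$, whence $q_A = 1/R(p) \ge 1$ and so $q_A = 1$, or $q_B < 1$, whence $q_A = 1$ by the maximum; either way $q_A = 1$. If $R(p) > 1$, then $q_A = 1$ would force $q_B = R(p) > 1$, which is impossible, so $q_A < 1$. Thus $\P(\co \lrl \spot \mid \co) = 1$ exactly when $R(p) \le 1$. Finally, using that $R$ is increasing in $p$, the set $\{p : R(p) < 1\}$ is the interval $[0,p_c)$ and $R(p) > 1$ for $p > p_c$, which identifies the transition at $p_c$ and yields the stated result.
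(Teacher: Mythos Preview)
Your Steps~1 and~3 (the conservation identity and the algebraic deduction $q_A=1\iff R(p)\le 1$) match the paper. The real gap is the final sentence of Step~3: you assert that $R$ is increasing in $p$ in order to identify $\{R<1\}$ with $[0,p_c)$, but $R(p)=\tfrac{p}{1-p}\cdot \tfrac{\E[S(\co)\mid\co\lrl\spot]}{\E[S(\so)\mid\so\lrl\car]}$ and the second factor depends on $p$ through the full interacting dynamics in a way you have not controlled. There is no reason the ratio of conditional cluster sizes should be monotone, and the paper does not claim it is. Instead, the paper proves monotonicity of $q_A$ itself via a tracer coupling (\thref{lem:mono}): adding $A$-particles can only extend the lifetime $\tau(\co)$, so $1-q_A(p)=\P(\tau(\co)=\infty\mid\co)$ is nondecreasing in $p$. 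Given that, for $p<p_c$ one picks $p'\in(p,p_c)$ with $R(p')<1$, deduces $q_A(p')=1$, and monotonicity of $q_A$ (not of $R$) pushes this down to $p$. Your argument needs this coupling lemma, or some substitute, and it is not a triviality.

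Your Step~2 is morally right but the case analysis is off in places. For $\lambda_B=0$ you argue that a fixed $B$-site is visited by infinitely many $A$-walks via a Green's function estimate, concluding $q_B=1$. That reasoning treats the $A$-walks as independent and ignores that they may be annihilated before arrival; it also proves too much, since for small $p$ one genuinely has $q_B<1$. The paper runs the argument the other way: from $q_B<1$ one gets a positive density of \emph{distinct} sites that are $B$-occupied for all time (here is exactly where $N<\infty$ or $\lambda_B=0$ is used, to prevent all the surviving $B$-mass from coalescing onto a zero-density set), and then a single rate-$\lambda_A$ random walk almost surely meets such a set on a transitive, infinitely accessible, unimodular graph, forcing $q_A=1$. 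Your $N<\infty$ case should be argued the same way rather than by the vague ``prevents concentration'' sentence.
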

 



We also characterize the weighted occupation time of $\0$ by $A$-particles. This formula holds for any choices of $M,N,\lambda_A$ and  $\lambda_B$, however it only yields a linear lower bound in supercritical regimes for which $\P(\a_0 \lrl \b_0) >0$. In these regimes, the relationship $V_T \geq W_T / 2M$ also gives a linear growth bound on $V_T$ whenever $M < \infty$.
\begin{theorem} \thlabel{thm:WT}
For all $T \geq 0$
\begin{align}
\E[ W_T] = \int_0^T \P(\tau(\co) > t) dt \geq \P(\a_\0 \nlrl \b) T \label{eq:Vb}.
\end{align}
And, if $M < \infty$, then 
\begin{align}
\E[V_T] \geq \f {\P(\a_\0 \nlrl \b) } { 2M}T.
\end{align}
\end{theorem}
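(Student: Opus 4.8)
The plan is to establish the identity $\E[W_T] = \int_0^T \P(\tau(\co) > t)\,dt$ (writing $W_T = W_T(\0)$) via the mass transport principle, and then to read off both stated bounds as easy consequences. The starting observation is that, viewing an $A_i$-cluster as $i$ original $A_1$-particles all sitting at the cluster's location, the integrand $\sum_{i \geq 1} i\xi_t^i(\0)$ is exactly the number of surviving original $A_1$-particles located at $\0$ at time $t$. Since all terms are nonnegative, Tonelli gives $\E[W_T] = \int_0^T \E[\sum_{i \geq 1} i\xi_t^i(\0)]\,dt$, so it suffices to prove $\E[\sum_{i \geq 1} i\xi_t^i(\0)] = \P(\tau(\co) > t)$ for each fixed $t$.

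To do this I would fix $t$ and define the transport function $f(u,v) = \ind{\tau(\car_u) > t \text{ and the cluster containing the } A_1\text{-particle from } u \text{ sits at } v \text{ at time } t}$. An original $A_1$-particle is alive precisely when its cluster has not yet annihilated with a $B$-cluster (coalescence preserves the particle, per the convention that $\a_u$ survives when it merges and follows a new path), so $\tau(\car_u) > t$ is the event that this particle is alive at time $t$. Summing over the destination, $\sum_{v} f(u,v) = \ind{\tau(\car_u) > t}$, while summing over the source, $\sum_{u} f(u,v) = \sum_{i \geq 1} i\xi_t^i(v)$. Because the initial types are i.i.d., the walks and braveries are i.i.d., and $K$ is preserved by $\Gamma_K$, the map $(u,v) \mapsto \E[f(u,v)]$ is invariant under the diagonal action of $\Gamma_K$; transitivity and unimodularity then justify applying the mass transport principle (\thref{lemma:mass_transport}) to conclude $\sum_v \E[f(\0,v)] = \sum_v \E[f(v,\0)]$. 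The left side equals $\E[\ind{\tau(\co) > t}] = \P(\tau(\co) > t)$ and the right side equals $\E[\sum_{i \geq 1} i\xi_t^i(\0)]$, which yields the identity upon integrating in $t$.

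The first lower bound is then immediate: on $\a_\0 \nlrl \b$ the root's $A$-particle is never annihilated, so $\tau(\co) = \infty > t$ for every $t \geq 0$; hence $\{\a_\0 \nlrl \b\} \subseteq \{\tau(\co) > t\}$ and $\P(\tau(\co) > t) \geq \P(\a_\0 \nlrl \b)$ uniformly in $t$. Integrating this bound over $[0,T]$ gives $\E[W_T] \geq \P(\a_\0 \nlrl \b)\,T$. For the $V_T$ inequality when $M < \infty$, the key remark is that the rule $A_i + A_j \to A_{i+j}$ only fires when $\max(i,j) \leq M$, so no $A$-cluster can ever exceed size $2M$; thus $\xi_t^i(\0) = 0$ for $i > 2M$ and $\sum_{i \geq 1} i\xi_t^i(\0) \leq 2M \sum_{i \geq 1} \xi_t^i(\0)$ pointwise, whence $W_T \leq 2M\,V_T$. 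Combining this with the bound on $\E[W_T]$ produces $\E[V_T] \geq \P(\a_\0 \nlrl \b)\,T/(2M)$.

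I expect the main obstacle to be the rigorous justification of the mass transport step: pinning down a measurable, diagonally $\Gamma_K$-invariant transport function that faithfully encodes ``the surviving $A_1$-particle from $u$ sits at $v$'' despite the relabeling that coalescence introduces (the cluster location is well defined because it follows the bravest surviving path), and verifying the integrability hypotheses needed to invoke \thref{lemma:mass_transport}. Once the identity for $\E[W_T]$ is in hand, the two inequalities are routine.
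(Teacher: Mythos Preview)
Your proposal is correct and follows essentially the same argument as the paper: define the indicator $W(u,v,t)=\ind{L_t(\car_u)=v}$, apply the mass transport principle to obtain $\P(\tau(\co)>t)=\E\sum_{i\ge1}i\xi_t^i(\0)$, integrate, and then use the trivial inclusion $\{\a_\0\nlrl\b\}\subseteq\{\tau(\co)>t\}$ together with the cap-induced bound $W_T\le 2M\,V_T$. You have in fact supplied more detail than the paper's own proof, which dispatches the integration and the $V_T$ inequality in a single sentence.
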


When $M=\infty$, there is no obvious comparison between $W_T$ and $V_T$. The DLACS with $p=1$ and $M=\infty$ is the classical coalescing random walk. It is proven in \cite[Theorem 1.2 (i)]{benjamini2016site} that  graphs with maximum degree $D$ satisfy $\E[V_T] \geq \log(1+DT)/D$. We prove an analogous logarithmic bound.
\begin{theorem} \thlabel{thm:VT}
Suppose that $M=\infty$. Let $D$ be the degree of the vertices in $G$. It holds that 
\begin{align}
\E \left[ \sum_{k \geq 1} \xi_t^k(\0) \right]\geq  \f{ [\P(\tau(\co) > t)]^2}{1+ 2Dt} \text{ for all $t \geq 0$}, \label{eq:density}
\end{align}
and thus,
$$\E [V_T] \geq \int_0^T \f{ [\P(\tau(\co) > t)]^2}{1+ 2Dt} dt 
\geq \f{[\P( \co \nlrl \spot)]^2}{2D}  \log(1+ DT)$$
for all $T \geq 0$.
\end{theorem}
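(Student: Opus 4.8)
The plan is to bound the occupation probability at the root below by the squared weighted occupation probability via Cauchy--Schwarz, and then to control the resulting second moment through a generator computation that closes up thanks to transitivity. Write $m_t(x)=\sum_{k\ge 1}k\,\xi_t^k(x)$ for the $A$-mass at $x$ and $Y_t=\sum_{k\ge 1}\xi_t^k(\0)$ for the number of $A$-clusters at the root. Since $M=\infty$, all $A$-particles sharing a site have coalesced, so $Y_t\in\{0,1\}$ and $m_t(\0)=m_t(\0)\,Y_t$. Cauchy--Schwarz then gives $\E[m_t(\0)]=\E[m_t(\0)Y_t]\le (\E[m_t(\0)^2])^{1/2}(\E[Y_t])^{1/2}$, that is
\[
\E\Big[\sum_{k\ge 1}\xi_t^k(\0)\Big]=\E[Y_t]\ \ge\ \frac{\E[m_t(\0)]^2}{\E[m_t(\0)^2]}.
\]
The differentiated form of \thref{thm:WT} identifies the numerator: since $W_T(\0)=\int_0^T m_t(\0)\,dt$, the identity $\E[W_T]=\int_0^T\P(\tau(\co)>t)\,dt$ yields $\E[m_t(\0)]=\P(\tau(\co)>t)$. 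Thus the first display of the theorem reduces to the second-moment estimate $\E[m_t(\0)^2]\le 1+2Dt$.

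For that estimate I would set $h(t):=\E[m_t(\0)^2]$ and run a differential inequality obtained from the DLACS generator. The root mass changes in three ways: the root cluster (size $m_t(\0)$) jumps off the root at rate $\lambda_A$, changing $m_t(\0)^2$ by $-m_t(\0)^2$; a neighboring cluster at $y\sim\0$ jumps onto the root and coalesces at rate $\lambda_A K(y,\0)$, changing $m_t(\0)^2$ by $2m_t(\0)m_t(y)+m_t(y)^2$; and an $A$--$B$ annihilation destroys the root cluster, which only decreases $m_t(\0)^2$ and may be discarded for an upper bound. Using transitivity to write $\E[m_t(y)^2]=\E[m_t(\0)^2]$ together with $\sum_{y\sim\0}K(y,\0)=1$ (double stochasticity from transitivity and unimodularity), the incoming $m_t(y)^2$ terms exactly cancel the outgoing $-m_t(\0)^2$ term, leaving
\[
\frac{d}{dt}\,\E[m_t(\0)^2]\ \le\ 2\lambda_A\sum_{y\sim\0}K(y,\0)\,\E\big[m_t(\0)\,m_t(y)\big].
\]

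The crux is the neighbor correlation bound $\E[m_t(\0)m_t(y)]\le 1$. Because $\0\ne y$, any two $A$-tokens contributing to this product lie in distinct clusters at time $t$ and so have never coalesced; until their (nonexistent) coalescence their realized trajectories are built from disjoint families of the independent walks $S^{x,j}$, and are therefore dominated by a pair of genuinely independent continuous-time random walks. Coupling the coalescing dynamics with independent walks so that occupancy of two distinct prescribed sites is dominated by the independent event gives $\E[m_t(\0)m_t(y)]\le\sum_{u,v}p_t(u,\0)\,p_t(v,y)=\big(\sum_u p_t(u,\0)\big)^2=1$, where $p_t$ is the walk kernel and $\sum_u p_t(u,\0)=1$ follows from the mass transport principle of \thref{lemma:mass_transport}. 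Feeding this back yields $\tfrac{d}{dt}\E[m_t(\0)^2]\le 2\lambda_A$, and integrating from $\E[m_0(\0)^2]=\P(\a_\0)=p\le 1$ gives $\E[m_t(\0)^2]\le 1+2\lambda_A t$; under the per-edge jump-rate normalization used in the comparison with classical coalescing random walk in \cite{benjamini2016site} this is the stated $1+2Dt$. The first display follows, and for the second I would integrate it over $[0,T]$, use $\P(\tau(\co)>t)\ge\P(\co\nlrl\spot)$ to pull the survival probability out of the integral, and evaluate $\int_0^T\frac{dt}{1+2Dt}=\frac{1}{2D}\log(1+2DT)\ge\frac{1}{2D}\log(1+DT)$.

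The two places demanding real care are the correlation lemma and the legitimacy of the generator computation in infinite volume. For the former one must make precise the claim that two never-coalesced tokens evolve as independent walks despite the bravery-based switching rule and the presence of $B$-particles, and then produce an honest coupling under which coalescence and annihilation can only reduce the probability of occupying two distinct sites; this monotone coupling is the heart of the argument and the main obstacle. For the latter one must justify differentiating $\E[m_t(\0)^2]$ term by term and the transitivity cancellation, for instance by working with finite truncations or directly through the mass transport principle, and verify that the discarded annihilation contributions are genuinely nonpositive.
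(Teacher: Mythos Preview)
Your Cauchy--Schwarz reduction to the second moment and the identification $\E[m_t(\0)]=\P(\tau(\co)>t)$ are exactly what the paper does. The divergence is in how you bound $\E[m_t(\0)^2]$, and that is where your argument has a genuine gap.

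The step that fails is the correlation bound $\E[m_t(\0)m_t(y)]\le 1$. Your justification is that two tokens ending at distinct sites have never coalesced and therefore ``their realized trajectories are built from disjoint families of the independent walks $S^{x,j}$''. This is not correct in the paper's model. When clusters coalesce, the merged cluster follows the path of its \emph{bravest} constituent; hence the trajectory of $\a_u$ after time $s$ is the path $S^{x,j}$ of whichever particle is currently bravest in $\a_u$'s cluster, and \emph{which} $(x,j)$ that is depends on the entire interaction history (including which $A$-particles were annihilated by $B$-particles before they could join). So although the two final driving paths are distinct, the random choice of which paths drive the two clusters is highly entangled, and no independence or domination by independent walks follows. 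The monotone coupling you flag as ``the main obstacle'' really is missing, and I do not see how to produce it along the lines you sketch. A secondary issue is that your generator bound yields $1+2\lambda_A t$, not $1+2Dt$; these only agree under a per-edge rate normalisation that the paper does not adopt, so the hand-wave at the end does not close the constant.

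The paper bypasses the correlation problem entirely. It applies mass transport (\thref{lemma:mass_transport}) a second time, to the indicators $R(u,v,t,k)=\ind{L_t(\a_u)=v,\ \texttt{size}_t(\a_u)=k}$, obtaining the exact identity
\[
\sum_{k\ge 1} k^2\,\E[\xi_t^k(\0)] \;=\; \E[\texttt{size}_t(\a_\0)],
\]
which, since $M=\infty$ forces $Y_t\in\{0,1\}$, equals $\E[m_t(\0)^2]$. The right-hand side is the expected size of the cluster containing $\a_\0$; by the monotone coupling of \thref{lem:mono} this is bounded above by the same quantity in pure coalescing random walk (no $B$-particles), and the latter is at most $1+2Dt$ by \cite[Proposition~2.4]{foxall2018coalescing}. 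So the second-moment bound comes from a structural identity plus an already-available size estimate, rather than from a two-point correlation inequality that would itself need proof.
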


As for an upper bound on $\E[V_T]$, the monotonicity result in \thref{lem:mono} ensures that $V_T$ is dominated by its behavior in the case $p=1$. This is the classical coalescing random walk for which the occupation time of the root grows logarithmically for trees and high dimensional lattices and more rapidly for graphs on which random walk is recurrent \cite{benjamini2016site, foxall2018coalescing, griffeath}. 

 We conclude by giving special attention to the symmetric case $M=N=\infty$ with equal rates $\lambda_A=\lambda_B=1$ and equal initial densities $p=1/2$. In this case, the dynamics can be summarized by the reactions $A+A\to A$, $B+B\to B$ and $A+B\to\emptyset$. Intuitively, these dynamics interpolate between the classical coalescing random walk, where particles always coalesce into a single particle upon meeting, and the annihilating random walk, where particles always mutually annihilate upon meeting.  This DLACS is equivalent to the case $n=2$ and $c_i = 1/2 = a_i$ from Stephenson's model \cite{stephenson1999asymptotic}. As discussed in the introduction, he was unable to analyze the case $n=2$. Thus, the upcoming \thref{thm:crw} resolves a missing case and does so in a robust way that holds for graphs beyond $\mathbb Z^d$.

 It is well known that the asymptotic density of particles in the annihilating system is $1/2$ that of the coalescing system \cite{arratia}. Naively, one might expect the DLACS model to split the difference and have $3/4$ the limiting particle density of the coalescing random walk. However, our final theorem shows that it is on the order of $2/3$. The reason for the $2/3$, given in our proof, is that there is a coupling that shows a particle is present in the DLACS model if and only if it is present in coalescing random walk and the coalescing history of the particle satisfies a certain property (see \eqref{eq:good}). We then show at \eqref{eq:ab} that the probability of satisfying this property has a recursive formulation with fixed point $2/3$. 

To state the theorem, let $\zeta_t(x)$ denote the coalescing random walk process. We set $\zeta_t(x) = 1$ if there is a particle at $x\in \mathcal V$ at time $t$ and $\zeta_t(x)=0$ otherwise. Initially $\zeta_0(x) =1$ for all $x$. Particles jump at rate $1$, and when a particle jumps it chooses a neighbor uniformly at random. When two particles meet, they coalesce into a single particle, which moves as a single random walk. 
 \begin{theorem} \thlabel{thm:crw}
     Suppose that $G = \mathbb Z^d$ or is a transitive unimodular graph on which the simple random walk is transient. If $M=N=\infty$, $\lambda_A=\lambda_B = 1$, and $p=1/2$, then as $t\to\infty$
     $$P(\xi_t(\0) \neq 0) \sim \frac23 P(\zeta_t(\0)=1)$$
 \end{theorem}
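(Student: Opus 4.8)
The plan is to couple the DLACS with the coalescing random walk $\zeta$ on a common probability space, driving both processes with the same walk paths $S^{x,j}$ and braveries $U^{x,j}$ while treating the initial types $(\xi_0(x))\in\{-1,+1\}^{\mathcal V}$ as independent auxiliary randomness. Since $\lambda_A=\lambda_B=1$ and $M=N=\infty$, every DLACS reaction occurs exactly when two $\zeta$-particles meet: like types coalesce (as in $\zeta$) and unlike types annihilate, so the DLACS cluster structure refines that of $\zeta$. I would build the coupling so that the single $\zeta$-particle occupying a site tracks the location of the surviving DLACS cluster of the same ancestry whenever one exists; this is legitimate because, after a coalescence in $\zeta$, the merged particle may follow any one of its constituent walks without altering the law of $\zeta$. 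With this choice $\0$ is occupied in the DLACS exactly when it is occupied in $\zeta$ \emph{and} the coalescence genealogy of the $\zeta$-particle at $\0$ leaves a surviving cluster once the type reactions are resolved; calling the latter event $G_t$, one obtains $\{\xi_t(\0)\neq 0\}=\{\zeta_t(\0)=1\}\cap G_t$.

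Next I would evaluate the survival probability along the genealogy. The genealogy of the $\zeta$-particle at $\0$ is a binary tree $T$ whose leaves are the initial particles that coalesced into it (each carrying an independent uniform sign in $\{-1,+1\}$) and whose internal nodes are the meeting events in time order. Survival propagates upward: a leaf survives with its own sign, and a node combining two independent subtrees that survive with probabilities $a$ and $b$ (each with uniform sign given survival) is annihilated precisely when both are empty or both survive with opposite signs. Hence the survival probability obeys
\[
f(a,b)=a+b-\tfrac32 ab,
\]
whose only fixed points are $0$ and $2/3$. The fixed point $2/3$ is strikingly stable: $f(2/3,b)=2/3$ for every $b$, every internal value lies in $[1/2,3/4]$, and $|f(a,b)-2/3|\le \tfrac12|a-2/3|$ for $b\in[1/2,1]$. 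Propagating this contraction up a root-to-leaf path of maximal length gives $|r(T)-2/3|\le \tfrac13 2^{-h}$ for the root survival value $r(T)$, where $h$ is the height of $T$; since $h\ge \log_2(\#\text{leaves})$, we get $r(T)\to 2/3$ as the number of coalescences grows.

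Because the signs are independent of the tree shape, this yields
\[
\P(\xi_t(\0)\neq 0)=\E\!\left[\ind{\zeta_t(\0)=1}\,r(T_t)\right]=\P(\zeta_t(\0)=1)\,\E\!\left[r(T_t)\mid \zeta_t(\0)=1\right],
\]
so the theorem reduces to $\E[r(T_t)\mid \zeta_t(\0)=1]\to 2/3$. As $r(T_t)\in[1/2,1]$ is bounded, dominated convergence shows it suffices to prove that, conditionally on $\{\zeta_t(\0)=1\}$, the number of coalescences in $T_t$ tends to infinity in probability as $t\to\infty$.

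The main obstacle is twofold. First, making the coupling rigorous: the rule ``$\zeta$ follows the surviving DLACS cluster'' entangles the tree shape with the signs, so the identity $\P(G_t\mid T_t)=r(T_t)$ cannot be obtained by conditioning on $T_t$ all at once; instead one reveals the driving randomness in time order along the coalescence history and applies the recursion step by step. Second, the genealogy-growth estimate: one must show that a $\zeta$-particle surviving at $\0$ up to time $t$ has absorbed an unbounded number of ancestral lineages, uniformly enough to pass to the limit. For $\mathbb Z^d$ with $d\le 2$ this follows from recurrence, whereas for transient $\mathbb Z^d$ and transient transitive unimodular graphs it requires showing that a tagged surviving particle keeps meeting and coalescing with fresh lineages at a positive asymptotic rate, so that $\#\text{leaves}(T_t)\to\infty$. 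I expect this conditioned growth statement to be the most delicate part of the argument.
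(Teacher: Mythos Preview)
Your overall strategy---couple DLACS with coalescing random walk, reduce to a conditional survival probability along the genealogy tree, derive the recursion $f(a,b)=a+b-\tfrac32 ab$, exploit its contraction toward $2/3$, and finally show the genealogy grows without bound---is exactly the paper's. The recursion, the contraction bound $|f(a,b)-2/3|\le\tfrac12|a-2/3|$, and the reduction to ``number of leaves $\to\infty$ conditionally on $\zeta_t(\0)=1$'' all match.

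Where the paper differs, and where your proposal has a genuine soft spot, is the coupling. You place the auxiliary randomness on the \emph{leaves} (the initial $\pm1$ types) and then try to steer $\zeta$ so that it follows the surviving DLACS cluster. As you already note, this entangles the shape of the genealogy tree $T_t$ with the type randomness: once two unlike types annihilate, there is no surviving cluster to follow, and any rule you adopt for $\zeta$ at that point makes $T_t$ a function of the types. Conditioning on $T_t$ then no longer leaves the types i.i.d., and the clean recursion is lost; the ``reveal in time order'' workaround you sketch may be salvageable but is not obviously so. The paper avoids this entirely by placing the auxiliary randomness on the \emph{internal nodes} of the tree rather than the leaves: in a graphical construction, each arrow is independently marked \texttt{OR} or \texttt{XOR}; $\zeta$ ignores the marks, while DLACS coalesces on \texttt{OR} and annihilates on \texttt{XOR}. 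Then $T_t$ is determined by the arrows alone, the marks on its internal nodes are i.i.d.\ and independent of $T_t$, and $\P(G_t\mid T_t)$ is exactly your $r(T_t)$ with no entanglement. This is the key simplification you are missing.

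For the second obstacle---showing $|T_t|\to\infty$ conditionally on $\zeta_t(\0)=1$---the paper does not argue directly about a tagged particle meeting fresh lineages. Instead it invokes voter-model duality: $|T_t|$ has the law of $|\zeta_t^{\0}|$ (the size of the cluster of vertices holding $\0$'s opinion) conditioned on $|\zeta_t^{\0}|>0$, and the conditional limit theorems of Bramson--Griffeath on $\mathbb Z^d$ and of Hermon--Li--Yao--Zhang on transient transitive unimodular graphs give that this conditioned size diverges. Your ``positive asymptotic meeting rate'' heuristic points in the right direction but would require reproving these results; citing the voter-model literature is what actually closes the argument.
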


 The asymptotic behavior of $\P(\zeta_t(\0)=1)$ is known in this setting. For $\mathbb{Z}^d$, Bramson and Griffeath \cite{bramson1980asymptotics} proved that 
\begin{align}
\P(\zeta_t(\0) =1) \sim \begin{cases}
    \f 1 {\sqrt \pi} t^{-1/2}, & d =1 \\
    2 \pi \log t /t , & d =2 \\
    2 \gamma_d t^{-1}, & d \geq 3
\end{cases}
\label{eq:crwa}
\end{align}
with $\gamma_d$ the return probability for simple random walks started at the origin of $\mathbb Z^d$. For transitive unimodular graphs on which the simple random walk is transient, Hermon, Li,  Yao, and Zhang \cite{hermon2022mean} found that $\P(\zeta_t (\0) = 1) \sim \alpha t^{-1}$
with $\alpha$ the probability a random walk started at $\0$ and another started from a uniformly chosen neighbor of $\0$ do not meet.

\subsection{Further questions}
 It would be nice to resolve the missing case from \thref{thm:main}. The difficulty is that when $B$-particles coalesce with no cap the density of occupied sites decays rapidly. So, survival of $B$-particles is not enough to easily deduce that $A$-particles cannot indefinitely avoid contact with $B$-particles. 
\begin{question}
    Show that the survival probability of $A$-particles undergoes a phase transition at $p_c$ in the case $N=\infty$, $\lambda_B>0$, and $G$ does not have the random walk intersection property.
\end{question}

We would also like to know if critical values are continuous as the cap is increased.
\begin{question}
Fix $(G,K)$, $N$, $\lambda_A$, and $\lambda_B$ and let $p_c(M)$ be as at \eqref{eq:pc} with the convention that $p_c(\infty)$ is the case $M=\infty$. Is $\lim_{M \to \infty} p_c(M) = p_c(\infty)$?
\end{question}
The definition of $p_c$ in \eqref{eq:pc} is implicit. We are interested in finding explicit formulas. This is likely difficult for general $G$, but the graphs $\mathbb Z^d$ and the $d$-regular tree $\mathbb T_d$ are good places to start.
\begin{question}
Find $p_c$ for any graph and non-symmetric choice of $\lambda_A$, $\lambda_B$, $M$, $N$. 
\end{question}

An essential first step is proving that $p_c$ is non-trivial when $M=\infty$ and $N<\infty$. Here is a concrete question.

\begin{question}
Let $G = \mathbb Z^d$ and fix $\lambda_A=1$, $\lambda_B \in[0,\infty)$, $M=\infty$, and $N =0$. Is $p_c <1$?
\end{question}

Even showing that $p_c<1$ on $\mathbb Z$ with $\lambda_B=0$ would be interesting. This is similar to the setting from \cite{damron2019parking,johnson2020particle, parking_on_integers}, but with coalescence. See Figure~\ref{fig:sim} for simulations that suggest $p_c\in (.6,.8)$ in this case. A naive heuristic, inspired by \cite{droz1995ballistic}, is that the distance between two given $A$ particles is a rate-$2$ random walk, while the distance between a given $A$- and $B$-particle is a rate-$1$ random walk. Thus, on average $A+A \to A$ reactions should occur at twice the rate as $A+B \to \varnothing$ reactions. Across all reactions, $A$-particles disappear at twice the rate of $B$-particles. So, in this mean-field paradigm, we would expect the critical point to occur at $p=2/3$, when there are initially twice as many $A$- as $B$-particles. It is quite possible that a mean-field assumption is not justified as there are low-dimensional effects as occurred in similar systems \cite{bramson1991asymptotic}. Unpublished simulations from Brune suggest that $p_c > .7$, supporting the possibility that low-dimensional effects perturb $p_c$ away from mean-field predictions [private correspondence]. 

\subsection{Overview of proofs}
In \thref{lem:mono}, we prove that the longevity of all $A$-particles is non-decreasing as additional $A$-particles are introduced to the system. The argument uses tracer particles to track the difference between systems augmented with $A$-particles. Tracers were introduced in \cite{bramson1991asymptotic} and have been adapted in various ways \cite{cabezas2019recurrence, johnson2020particle, bahl2022diffusion}. 
The formula for $p_c$ in \thref{thm:main} and bounds in \thref{thm:WT} and \thref{thm:VT} are derived from different applications of the mass transport principle stated in \thref{lemma:mass_transport}. The lower bound on $\E[V_T]$ in \thref{thm:VT} uses an estimate on the particle size in coalescing random walk from \cite{foxall2018coalescing}. The proof of \thref{thm:crw} relies on a construction of DLACS for the given parameter values that uses the coalescing random walk. This coupling shows that in this symmetric case, the DLACS can be seen as a $2/3$-thinning of the coalescing random walk.

\section{Proofs of \thref{thm:main}, \thref{thm:WT}, and \thref{thm:VT}}

First we prove a monotonicity result for the lifespan of $A$-particles.

\begin{lemma} \thlabel{lem:mono}
Consider two systems $\xi$ and $\xi^+$ with identical underlying $(S^{x,j}, U^{x,j})_{j \in \mathbb Z}$ but $\xi_0(x) \leq  \xi_0^+(x)$ for all $x \in \mathcal V$. Define the lifespans of $\co$ in the two systems as $\tau(\co)$ and $\tau^+(\co)$. There exists a coupling such that $\tau(\co) \leq \tau^+(\co).$
\end{lemma}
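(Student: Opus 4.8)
The plan is to construct the coupling explicitly on the shared randomness $(S^{x,j},U^{x,j})_{j\in\mathbb Z}$ and to track the difference between the two systems by a small number of \emph{tracer} clusters, verifying through a reaction-by-reaction analysis that this difference always favors $\xi^+$. First I would dispose of the trivial case: if $\xi_0(\0)=-1$ then $\tau(\co)=0$ by definition and there is nothing to prove, so assume $\xi_0(\0)=1$; since $\xi_0(\0)\le\xi_0^+(\0)$ with both values in $\{-1,1\}$, we also get $\xi_0^+(\0)=1$. Next I would reduce to a single extra $A$-particle. Writing $W=\{x\in\mathcal V : \xi_0(x)<\xi_0^+(x)\}$ for the sites where $\xi$ has a $\spot$ and $\xi^+$ has an $\car$, I convert the sites of $W$ one at a time, producing a chain $\xi=\xi^{(0)}\le\xi^{(1)}\le\cdots$ increasing to $\xi^+$; run on the common $(S^{x,j},U^{x,j})$, a single-conversion monotonicity statement gives a non-decreasing sequence of root-lifespans, and a monotone limit then yields $\tau(\co)\le\tau^+(\co)$. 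It therefore suffices to treat two systems differing only at one site $w$, where $\xi_0(w)=-1$ and $\xi_0^+(w)=+1$.

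For this single conversion I run both systems on the shared randomness and follow the cluster descended from $w$, which is (part of) a $\spot$-cluster in $\xi$ and an $\car$-cluster in $\xi^+$. The key is to maintain an invariant that away from one \emph{discrepancy cluster} the two configurations agree, and that the discrepancy is always of a form favorable to $\xi^+$ -- namely a cluster that is $A$ in $\xi^+$ but $B$ in $\xi$, or an $A$-cluster present only in $\xi^+$, or a $B$-cluster present only in $\xi$, or an $A$-cluster that is strictly larger in $\xi^+$ (with the mirror $B$-larger-in-$\xi$ form). Each of these makes $\xi^+$ weakly richer in $A$-mass and weakly poorer in $B$-mass. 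The heart of the argument is to check that every reaction restores a favorable form: when the discrepancy cluster meets a cluster $C$ common to both systems, the reaction in $\xi^+$ is $\car+\text{(type of }C)$ while in $\xi$ it is $\spot+\text{(type of }C)$, and a short case split over the type of $C$ and over whether the caps $M,N$ are exceeded shows where the discrepancy relocates. Because a coalesced cluster inherits the path and bravery of its bravest member, the relocated discrepancy may thereafter follow different vertices in the two systems, so I would track both copies until they are consistent again.

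Finally I would read off the conclusion from the invariant. Since every discrepancy favors $\xi^+$, any $\spot$-cluster capable of annihilating the root-origin $A$-particle in $\xi^+$ is also present in $\xi$; consequently, whenever the root-origin $A$-particle is still alive in $\xi$ (that is, $t<\tau(\co)$), its counterpart in $\xi^+$ also lies in a live $\car$-cluster and has not yet annihilated against a $\spot$-cluster. Hence $\tau(\co)\le\tau^+(\co)$, as required.

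I expect the main obstacle to be precisely the bookkeeping forced by coalescence. Because a coalesced cluster follows the trajectory of its bravest particle, the discrepancy cluster can occupy different vertices and follow different random-walk paths in $\xi$ and in $\xi^+$, so one cannot keep the two systems literally equal off a single common site; the invariant must be phrased so that the discrepancy is tracked separately in each system. Verifying that it remains a single favorable object under every reaction -- and, when a finite cap $M$ or $N$ causes like clusters to pass rather than merge, controlling the possibility that the discrepancy splits into several pieces -- is the delicate step, and is exactly what the tracer formalism is designed to handle.
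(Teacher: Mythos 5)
Your skeleton---reduce to a single-site difference and track the discrepancy with a tracer, checking reaction by reaction that it always favors $\xi^+$---is the same as the paper's. But the proposal stops before the actual content of the proof, and the difficulties you flag at the end as ``delicate'' (the discrepancy following different trajectories in the two systems, or splitting into several pieces) are exactly the ones the paper's construction eliminates by a device you are missing. After reducing, via the finite-system approximation of Cabezas--Rolla--Sidoravicius, to the addition of a single extra $A_1$-particle, the paper \emph{modifies the bravery of that extra particle to lie strictly below every other bravery in the (finite) system}. Since a coalesced cluster inherits the path of its bravest member, any cluster the extra particle merges into keeps its original trajectory; hence all particles common to $\xi$ and $\xi^+$ follow \emph{identical} paths in both systems, the discrepancy never fragments, and it is always a single tracked object (an extra $A$ in $\xi^+$, or, after that $A$ is consumed in an annihilation, an extra $B$ in $\xi$). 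This is also what makes the cap case tractable: a size-$(M{+}1)$ cluster in $\xi^+$ refuses a coalescence that its size-$M$ counterpart in $\xi$ accepts, and the tracer then switches to the \emph{less brave} of the two non-coalescing clusters precisely so that the common system is unperturbed. Without the bravery trick, your invariant with several discrepancy types (``strictly larger in $\xi^+$,'' two copies tracked separately, etc.) is genuinely hard to close, and you have not closed it.

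Two further points. Your elementary step converts a site from $B_1$ to $A_1$ in one move; when $\lambda_A\neq\lambda_B$ the two versions of that particle traverse the same path at different speeds, so the discrepancy is two clusters at different locations from time zero---strictly harder than the paper's one-sided step of introducing a single $A_1$ (removal of a $B_1$ is the mirror-image statement). And the reduction to finitely many disagreement sites should come from the finite-volume approximation of $\tau(\co)$, not from a monotone limit along an infinite chain of conversions, which you assert but do not justify. In short: right architecture, but the case analysis that constitutes the proof is deferred rather than done, and the key idea that makes it doable (minimizing the extra particle's bravery) is absent.
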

\begin{proof}
        Following \cite[Lemma 1]{cabezas2019recurrence}, the quantity $\tau(\co)$ can be approximated by systems with finitely many particles in a finite ball centered at $\0$. Thus, it suffices to exhibit a coupling with $\tau(\co) \leq \tau^+(\co)$ in a system with finitely many particles ($\sum_{v \in \V} |\xi_0(v)| < \infty$) and
        $\xi^+$ introducing an $A_1$-particle at a site $x$. 

        Suppose that $\xi^+(x) = j^*$. We modify the bravery $U^{x,j^*}$ to be the minimum of all other braveries in the (finite) system divided by $2$. Following \cite[Section 3.1]{cabezas2019recurrence}, we will account for the differences introduced by an extra $A_1$-particle at $x$ with a \emph{tracer} that can be either \emph{active, dormant,} or \emph{dead}. We say that the tracer is tracking a given particle if it is following the tracked particle's random walk path. 
  
         The tracer initially is \emph{active} and tracks the extra $A$-particle at $x$ following $S^{x,j^*}$ when $j^*\geq 1$. The state of the tracer changes depending on the size/type of particle it is tracking. 
       
        \begin{description}
            \item[Tracking an $A$-particle with size $\leq M$] \quad 
                \begin{itemize} 
                    \item Suppose that the tracked particle coalesces with another $A$-particle. If the resulting size is $\leq M+1$, then the tracer becomes/remains dormant and continues tracking the coalesced particle. If the resulting size is $>M+1$, then the tracer dies. 
                    \item Suppose that the tracked particle mutually annihilates with a $B$-particle. If the tracer is active, then it remains active and begins following the path of the $B$-particle. If the tracer is dormant, then it dies.
                \end{itemize}
            \item[Tracking an $A$-particle with size $M+1$]\quad 
                \begin{itemize} 
                    \item  Suppose that the tracked particle meets an $A$-particle with size $\leq M$. The particles do not coalesce. The tracer becomes/remains active and tracks whichever of the two non-coalesced particles has lower bravery. 
                    \item Suppose that the tracked particle mutually annihilates with a $B$-particle. If the tracer is active, the tracer remains active and begins following the path of the $B$-particle. If the tracer is dormant, then it dies.
                \end{itemize}
            \item[Tracking a $B$-particle] If the tracer is active and meets another $A$-particle, then it begins tracking that particle and remains active. The size of the $A$-particle does not increase. If the tracer is dormant, then it dies.
        \end{description}
    The tracer does not influence the evolution of $\xi^+$, rather, it accounts for the discrepancy between $\xi$ and $\xi^+$. If the tracer is active and tracking an $A$-particle, then it accounts for an extra $A$-particle in $\xi^+$. If it is active and tracking a $B$-particle, this accounts for an extra $B$-particle in $\xi$. Otherwise, the positions of the particles in $\xi$ and $\xi^+$ are identical. This is described for non-coalescing systems in \cite[Section 3.1]{cabezas2019recurrence}. Thus, if $\co$ never interacts with the tracer, then the extra particle has no effect on $\tau^+(\co)$, so  $\tau(\co)= \tau^+(\co)$. If $\co$ is at some point tracked by the tracer or interacts with a tracked particle, the tracer rules are such that $A$-particle paths are extended. Thus, $\tau(\co) \leq \tau^+(\co)$. This gives a coupling with the claimed inequality. 
\end{proof}

Our main tool is a mass-transport principle. The following is a minor modification of \cite[Theorem 8.7]{lyons2016probability}. 

\begin{lemma}\thlabel{lemma:mass_transport}
	Let $Z\colon \mathcal{V}\times \mathcal{V}\rightarrow [0,\infty)$ be a collection of random variables such that $\E [Z(u,v)] = \E [Z(\varphi(u),\varphi(v))]$ for all $\varphi\in\Gamma_K$ whenever $v$ is accessible from $u$ or $u$ is accessible from $v$, and $\E[ Z(u,v)] = 0$ otherwise.
	Then 
	\begin{equation*}
	\E   \sum_{v \in  \mathcal{V} } Z(\root ,v) = \E   \sum_{u \in \mathcal{V}  } Z(u,\root). 
	\end{equation*}
\end{lemma}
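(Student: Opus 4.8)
The statement is the mass-transport principle, and the plan is to deduce it from the deterministic mass-transport principle for the unimodular transitive triple $(G,K,\Gamma_K)$ applied to the averaged function. First I would set $g(u,v) := \E[Z(u,v)] \in [0,\infty)$. Since every summand is nonnegative, Tonelli's theorem lets me interchange expectation and summation, so the two sides of the claim become $\sum_{v \in \mathcal V} g(\0, v)$ and $\sum_{u \in \mathcal V} g(u, \0)$, both well defined in $[0,\infty]$. Thus it suffices to prove the deterministic identity $\sum_{v} g(\0,v) = \sum_u g(u, \0)$ for the nonnegative function $g$.

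Next I would upgrade the hypotheses into full diagonal $\Gamma_K$-invariance of $g$, namely $g(u,v) = g(\varphi u, \varphi v)$ for every $\varphi \in \Gamma_K$ and all $u,v \in \mathcal V$. The one point needing verification is that the accessibility relation is preserved by $\Gamma_K$: if $v = x_0, x_1, \ldots, x_n = u$ witnesses accessibility with $\prod_{i} K(x_i, x_{i+1}) > 0$, then since every $\varphi \in \Gamma_K$ satisfies $K(\varphi x_i, \varphi x_{i+1}) = K(x_i, x_{i+1})$, the image path witnesses that $\varphi u$ is accessible from $\varphi v$; the same applies to $\varphi^{-1}$. Hence the set of accessibility-related pairs is a union of diagonal $\Gamma_K$-orbits. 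On such pairs invariance of $g$ is exactly the hypothesis, and on the complementary pairs both $g(u,v)$ and $g(\varphi u, \varphi v)$ vanish. This yields the claimed invariance on all of $\mathcal V \times \mathcal V$.

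The remaining step is the combinatorial core of the principle, which I would carry out by decomposing $\mathcal V \times \mathcal V$ into orbits of the diagonal action of $\Gamma_K$. Because $g$ is diagonally invariant it is constant, say equal to $g_\omega$, on each orbit $\omega$, so both target sums split as $\sum_\omega g_\omega \, |\{v : (\0, v) \in \omega\}|$ and $\sum_\omega g_\omega \, |\{u : (u, \0) \in \omega\}|$. It therefore suffices to show, for each orbit $\omega = \Gamma_K \cdot (a,b)$, that $|\{v : (\0,v) \in \omega\}| = |\{u : (u,\0) \in \omega\}|$. Writing $\{v : (\0,v) \in \omega\} = \{\varphi b : \varphi \in \Gamma_K(a, \0)\}$ and noting $\Gamma_K(a,\0)$ is a left coset $\varphi_0 \Gamma_K(a,a)$ of the stabilizer of $a$, this set has cardinality $|\Gamma_K(a,a) b|$; symmetrically $|\{u : (u,\0)\in\omega\}| = |\Gamma_K(b,b) a|$. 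Unimodularity of $(G,K,\Gamma_K)$ asserts precisely $|\Gamma_K(a,a) b| = |\Gamma_K(b,b) a| < \infty$, which closes the reduction. Transitivity guarantees that $\0$ occurs as a first and as a second coordinate in every orbit contributing to either sum, so no orbit is miscounted.

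I expect the main obstacle to be this final orbit-counting identification — correctly realizing $\{v : (\0,v)\in\omega\}$ as the image under $\varphi_0$ of a stabilizer orbit and invoking unimodularity — together with the bookkeeping needed to confirm that $g$ is genuinely invariant despite the ``zero otherwise'' clause. The finiteness built into the unimodularity hypothesis is exactly what renders each orbit count a finite number and legitimizes the rearrangement of the nonnegative double sum.
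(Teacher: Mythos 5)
Your proof is correct, and it is worth noting that the paper itself does not prove this lemma at all: it simply remarks that the statement ``is a minor modification of'' Theorem 8.7 in Lyons--Peres \cite{lyons2016probability} and leaves the modification implicit. What you have written is a complete, self-contained argument whose core --- constancy of $g$ on diagonal $\Gamma_K$-orbits, the identification $|\{v : (\0,v)\in\omega\}| = |\Gamma_K(a,a)b|$ via the coset decomposition $\Gamma_K(a,\0)=\varphi_0\Gamma_K(a,a)$, and the appeal to the unimodularity hypothesis $|\Gamma_K(a,a)b| = |\Gamma_K(b,b)a| < \infty$ --- is exactly the orbit--stabilizer counting that underlies the cited theorem, here phrased directly in terms of the paper's discrete definition of unimodularity. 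Your two reduction steps are precisely the ``minor modification'' the paper glosses over: Tonelli converts the random statement into a deterministic one for $g(u,v)=\E[Z(u,v)]$, and your observation that $K$-preserving automorphisms (and their inverses, since $\Gamma_K$ is a group) preserve the accessibility relation upgrades the restricted invariance hypothesis, with its ``zero otherwise'' clause, to full diagonal invariance. All the details check out, including the points that are easy to fumble: transitivity is what makes $\Gamma_K(a,\0)$ nonempty so every orbit is counted on both sides, and nonnegativity is what legitimizes both the expectation--sum interchange and the rearrangement of the double sum over orbits. So your route buys a transparent, citation-free proof, at the cost of redoing the standard argument; the paper's route buys brevity at the cost of leaving the reader to reconstruct exactly the two reductions you carried out.
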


\begin{lemma}\thlabel{prop:gamma_MTP}For all $p \in [0,1]$ it holds that
	 \begin{align}
	 \Eco &= \Eso 
  \label{eq:mtp}.
	 \end{align}

\end{lemma}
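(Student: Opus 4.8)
The plan is to apply the mass transport principle \thref{lemma:mass_transport} to the transport kernel
$Z(u,v) = \ind{\a_u \lrl \b_v}$,
the indicator that the $A$-particle started at $u$ and the $B$-particle started at $v$ mutually annihilate. The crux is a pair of deterministic summation identities. Fix a realization in which $\0$ begins with an $A$-particle that is eventually destroyed. At the annihilation time its $A$-cluster collides with a single $B$-cluster, and since the reaction $A_i + B_j \to \varnothing$ destroys both clusters in their entirety, that $B$-cluster consists of exactly $S(\co)$ coalesced particles originating at $S(\co)$ distinct sites $v$. Each such $v$ contributes $Z(\0,v)=1$ and no other $v$ does, so $\sum_{v} Z(\0,v) = S(\co)$; the identity also holds with both sides zero when $\0$ begins with a $B$-particle or when $\a_\0 \nlrl \b$. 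By the symmetric argument, $\sum_u Z(u,\0) = S(\so)$, as the $A$-cluster annihilating $\b_\0$ has size $S(\so)$ and is assembled from particles of exactly $S(\so)$ distinct origin sites $u$. Taking expectations and invoking \thref{lemma:mass_transport} then gives $\Eco = \E\sum_v Z(\0,v) = \E\sum_u Z(u,\0) = \Eso$, which is \eqref{eq:mtp}.

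It then remains to verify the hypotheses of \thref{lemma:mass_transport}. Since $Z\in\{0,1\}$ it is $[0,\infty)$-valued. For the diagonal invariance, I would argue that the law of the whole DLACS is invariant under any $K$-preserving automorphism $\varphi\in\Gamma_K$: the initial configuration is a product measure (hence invariant under every bijection of $\mathcal V$), the walk kernel $K$ and the i.i.d.\ braveries are preserved by $\varphi$, and the reaction rules in \eqref{eq:rules} are applied equivariantly, so $\varphi$ carries the event $\{\a_u \lrl \b_v\}$ onto $\{\a_{\varphi(u)} \lrl \b_{\varphi(v)}\}$ in a measure-preserving way, giving $\E[Z(u,v)] = \E[Z(\varphi(u),\varphi(v))]$. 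For the support condition, observe that if $Z(u,v)>0$ with positive probability then the particle from $u$ and the particle from $v$ share a common vertex $w$ at the annihilation time; each reached $w$ by concatenating random-walk steps — its own, then those of the clusters it merges into — every step along a positive-$K$ edge, so $w$ is accessible from both $u$ and $v$. Because $G$ is simple and undirected, $K(a,b)>0 \iff K(b,a)>0$, so accessibility is symmetric; splicing a $u \to w$ path with a reversed $v \to w$ path shows $v$ is accessible from $u$. Hence $\E[Z(u,v)] = 0$ whenever neither vertex is accessible from the other, as required. (When $N=\infty$ the quantities $S(\co),S(\so)$ may have infinite expectation, but the principle delivers the equality in $[0,\infty]$, so this causes no difficulty.)

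I expect the main obstacle to be the careful bookkeeping behind the two pointwise identities rather than the mechanical application of the principle. One must confirm that the particle originating at $\0$ genuinely persists as a member of the annihilating cluster through its entire coalescence history, and that the size of an annihilating cluster is precisely the number of distinct origin sites feeding into it — which rests on each coalescence merging particles from disjoint origin sets, so that sizes add without double counting. The second delicate point is the support hypothesis: because coalesced particles follow the bravest member's path rather than their own, it is not immediate that annihilation forces mutual accessibility of the starting sites, and the argument above reduces this to the undirectedness of $G$.
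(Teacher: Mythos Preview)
Your proof is correct and takes essentially the same approach as the paper: the paper defines the size-stratified kernel $Z_{j,k}(u,v) = \ind{\a_u \wedge \b_v \wedge (\a_u \lrljk \b_v)}$, applies mass transport for each $(j,k)$, and then sums, whereas you work directly with the aggregate $Z(u,v) = \sum_{j,k} Z_{j,k}(u,v) = \ind{\a_u \lrl \b_v}$ and identify the row and column sums as $S(\co)$ and $S(\so)$ in one stroke. You are more careful than the paper in checking the hypotheses of \thref{lemma:mass_transport}; one small caveat is that your accessibility argument invokes $K(a,b)>0 \Leftrightarrow K(b,a)>0$, which the paper does not explicitly assume (it simply applies the lemma without comment).
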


\begin{proof}

For each $u,v\in \mathcal{V}$ and pair of integers $j,k\ge 1$, define an indicator variable 
	\begin{align}
		Z_{j,k}(u,v) = \ind{\car_{u} \wedge \spot_{v}\wedge  (\car_{u}\lrljk \spot_{v})}.
	\end{align}
	\thref{lemma:mass_transport} ensures that
	\begin{align}
	k\P(\car_\0 \lrljk \spot) =\E \sum_{v\in\mathcal V} Z_{j,k}(\0,v) = \E\sum_{u\in\mathcal V}Z_{j,k}(u,\0)= j \P( \car \lrljk \so ).
	\end{align}
	We have used the fact that there are exactly $k$ and $j$ distinct sites corresponding to the $k$- and $j$-coalesced particles that are counted by the indicators. Summing over all $j,k\ge 1$ gives \eqref{eq:mtp}.
\end{proof}



\begin{proof}[Proof of \thref{thm:main}]
By conditioning, we may expand \eqref{eq:mtp} as 
$$\E[S(\co) \mid \co \lrl \b ] \P(\co \lrl \b \mid \co) p =\E[S(\so) \mid \so \lrl \a ] \P(\so \lrl \a \mid \so)(1-p) .$$
Rearranging gives
\begin{align}
\f{p}{1-p}\f{\E[S(\co) \mid \co \lrl \spot ]}{\E[S(\so) \mid \so \lrl \car ]}  = \f{ \P(\so \lrl \car \mid \so) }{ \P(\co \lrl \spot\mid \co )}. \label{eq:condition}
\end{align}
Thus, whenever 
$$ \f{p}{1-p} \f{\E[S(\co) \mid (\co \lrl \spot) \wedge \co ]}{\E[S(\so) \mid (\so \lrl \car) \wedge \so ]} >1,$$
we have $\P(\co \lrl \b \mid \co) <1$. 

On the other hand, when
$$ \f{p}{1-p} \f{\E[S(\co) \mid (\co \lrl \spot) \wedge \co ]}{\E[S(\so) \mid (\so \lrl \car) \wedge \so ]} <1,$$
we have 
$$ \f{ \P(\so \lrl \car \mid \so )}{ \P(\co \lrl \spot\mid \co )} <1.$$
This implies that $\P(\b_\0 \lrl \a \mid \b_\0) <1$.

The conclusion that $\P(\b_\0 \lrl \a \mid \b_\0) <1$ when $p< p_c$ ensures that $\P(\b_\0 \nlrl \a) >0$. Thus, there is a positive density of $B$-particles that survive for all time. The assumption $N<\infty$ ensures that the density of distinct $B$-particles is least $\P(\b_\0 \nlrl \a) / 2N$. If $N=\infty$, but $\lambda_B=0$, we also have a positive limiting density of $B$-particles since no $B$-particle coalescence occurs. 

By similar reasoning as \cite[Lemma 6]{cabezas2019recurrence}, a positive density of distinct $B$-particles ensures that an independent rate $\lambda_A$ random walk $(X_t)$ started at $\0$ will almost surely coincide with a site containing  $B$-particle. This implies that $\P(\a_0 \lrl \b \mid \a_0) = 1$. If $N=\infty, \lambda_B>0$, and $(G,K)$ has the random walk intersection property, then almost surely the path of $X_t$ will intersect with the path initially assigned to each $B$-particle in the system (extending the path beyond the lifespan of the $B$-particle). Since some $B$-particles survive forever when $p<p_c$, it follows that $X_t$ will eventually collide with a $B$-particle almost surely. 

It follows from \thref{lem:mono} that $\P(\a_0 \nlrl \b \mid \a_0) = \P(\tau(\co) = \infty \mid \co)$ is increasing in $p$. Thus, for all $p< p_c$ we have $\P(\a_0 \nlrl \b \mid \a_0)=0$ and for all $p >p_c$ we have $\P(\a_0 \nlrl \b \mid \a_0)>0$. 
\end{proof}

\begin{proof}[Proof of \thref{thm:WT}]
		To obtain \eqref{eq:Vb}, let $L_t(\car_u)$ denote the location of $\car_u$ at time $t$ if $\car_u$ is still in the system at time $t$. Define the family of indicator random variables $W(u,v,t) = \ind{ L_t(\car_u) = v }$ for $u,v \in \mathcal V$ and $t \geq 0$. \thref{lemma:mass_transport} gives
\begin{align}
\P(\tau(\co) > t) = \E \sum_{v\in  \mathcal V} W(\0,v,t) = \E \sum_{u\in  \mathcal V} W(u,\0,t) = \E \sum_{i \geq 1} i\xi_t^i(\0).\label{eq:tau}
\end{align}
Integrating gives the claimed formula.
\end{proof}

\begin{proof}[Proof of \thref{thm:VT}]
Let $N_{t,k} = \xi_t^k(\0)$ be the number of size-$k$ $A$-particles at $\0$ at time $t$, $N_t = \sum_{k=1}^\infty N_{t,k}$ be the total number of $A$-particles at $\0$ at time $t$, and $n_t = \sum_{k=1}^\infty k\xi_t^k(\0)$ be the weighted number of $A$-particles at $\0$ at time $t$.
Let $\texttt{size}_t(\a_u)$ denote the size of the cluster that $\a_u$ belongs to at time $t$ with the convention that it is zero if $\a_u$ is no longer in the system.  For $u,v \in \V$, $t \in [0,\infty]$ and $k \geq 1$ define the indicators
$$R(u,v,t,k) = \ind{L_t(\a_u) = v, \texttt{size}_t(\a_u) = k}.$$ 

By \thref{lemma:mass_transport}
\begin{align}
    \P(\tau(\a_\0) > t, \texttt{size}_t(\a_\0) = k)  &= \E \sum_{v\in  \mathcal V} R(\0,v,t,k) = \E \sum_{u\in  \mathcal V} R(u,\0,t,k) = \E[k N_{t,k}]. 
\end{align}
Multiplying both sides by $k$ and summing gives
\begin{align}
\E[\texttt{size}_t(\a_\0)] = \sum_{k=1}^\infty k^2\E [N_{t,k}].\label{eq:size=}
\end{align}

Let $D$ be the degree of vertices in $G$. If follows from \cite[Proposition 2.4]{foxall2018coalescing} and \thref{lem:mono} that $\E[\texttt{size}_t(\a_\0)] \leq 1 + 2 D t.$ Applying this to \eqref{eq:size=} gives
\begin{align}
	\E \sum_{k=1}^\infty k^2 N_{t,k} \leq 1 + 2 Dt \label{eq:k2}.
\end{align}

Writing $\sum_{k=1}^\infty k N_{t,k} = \sum_{k=1}^\infty k \sqrt{N_{t,k}}\sqrt{N_{t,k}}$ and applying the Cauchy-Schwartz inequality yields
	\begin{align}
		n_{t}^2 \le \left( \sum_{k=1}^{\infty} k^{2}N_{t,k}\right) \left(\sum_{k=1}^{\infty}  N_{t,k}\right) = \left( \sum_{k=1}^{\infty} k^{2}N_{t,k}\right) N_{t}.
	\end{align}
Taking expectation and then applying the bound at \eqref{eq:k2} gives
	\begin{align}
		\E[n_{t}^{2}] \leq \E[N_{t}] \left( \sum_{k=1}^{\infty} k^{2}\E[N_{t,k}]\right) \leq \E[N_t] (1+ 2Dt).
	\end{align}
Since $(\E[n_t])^2 \leq \E[n_t^2]$, we may rearrange the above inequality to obtain $\E[N_t] \geq (\E[n_t])^2/(1+2Dt).$
It is proven at \eqref{eq:tau} that $\E[n_t] = \P(\tau(\co) > t)$. This gives the claimed inequality for $\E[N_t]$.
\end{proof}

\section{Proof of \thref{thm:crw}}

We first describe a coupling between the coalescing random walk $\zeta_t$ and DLACS $\xi_t$, which gives an alternate construction of DLACS in the case $M=N=\infty$, $\lambda_A=\lambda_B=1$, and $p=1/2$. Our probability space consists of a collection of independent compound Poisson processes, one for each oriented edge in the graph $G$ (for unoriented edges, we assign a Poisson process for each orientation) and a collection of i.i.d.~$\pm 1$-valued random variables, one for each vertex in $\mathcal V$. The Poisson process on the edge $(u,v)$ has rate $1/\deg(u)$, and at the times of this process we put an arrow from $u$ to $v$. In addition, we mark each arrow independently with an \texttt{OR} or a \texttt{XOR} with equal probability.

The coalescing random walk $\zeta_t \in \{0,1\}^{\mathcal V}$ indicates whether or not a coalesced particle is at each site at time $t$. It starts with a single particle at each vertex of $\mathcal V$. A particle at $u$ will move the first time it encounters an arrow from $u$ to any neighbor of $u$, and it moves to the indicated neighbor. If there is already a particle at the neighbor, then the particles coalesce and continue following arrows together forevermore. The marks of \texttt{OR} and \texttt{XOR} are ignored by the coalescing random walk. See Figure~\ref{fig:CRW} for a depiction of the  graphical  construction of the coalescing  random walk on a subset  of $\mathbb{Z}$ -- the marks \texttt{OR}  and \texttt{XOR} are omitted.

  \begin{figure}
   \centering 
   \includegraphics[width = .5\textwidth]{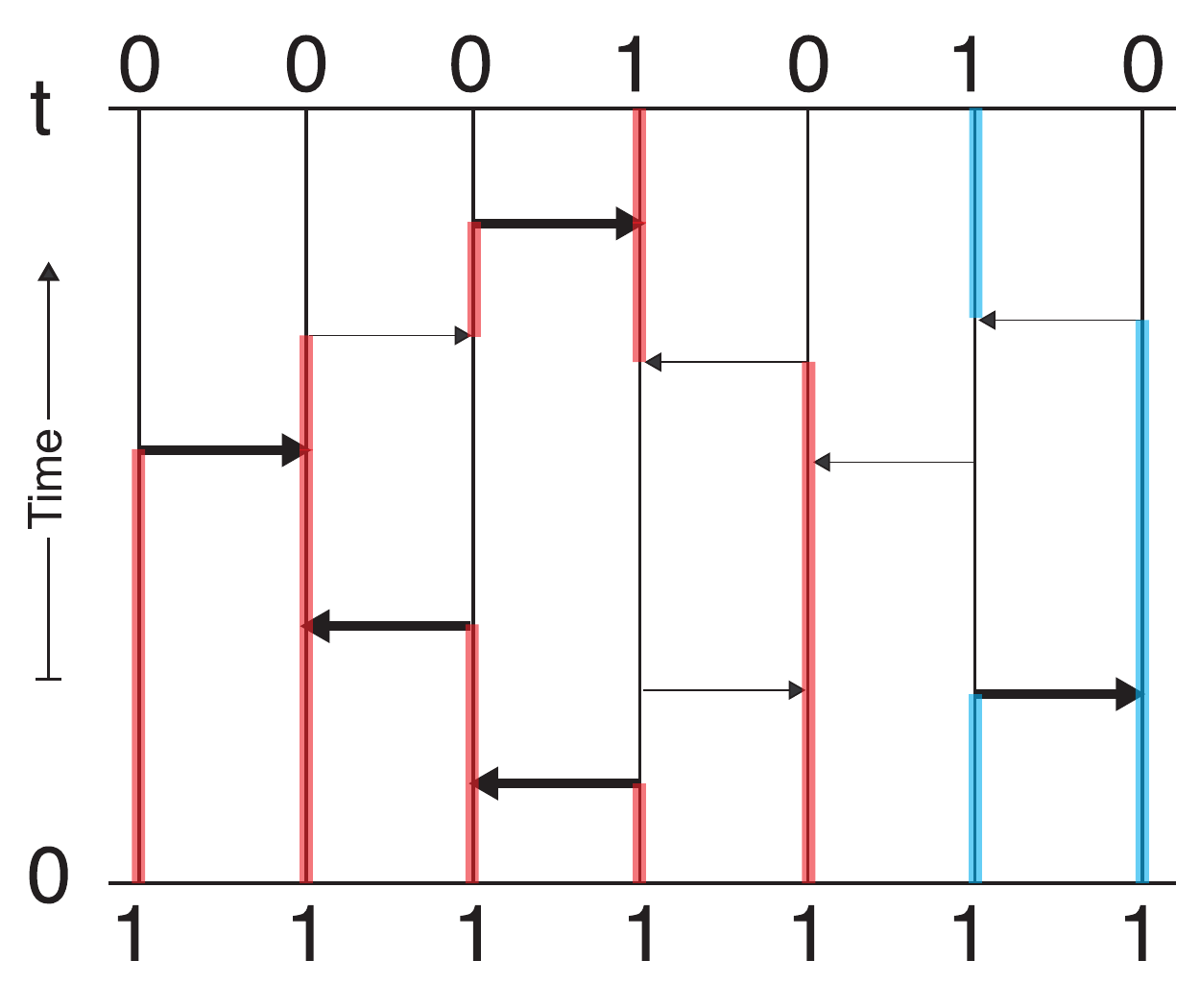}
   
   \caption{Graphical  construction  of the coalescing random walk $\zeta_t$ on a subset of $\mathbb{Z}$. Arrows along each oriented edge occur at times of independent Poisson processes with rate $1/2$. Particles move from the tail  to  the head of  an arrow, and when  two particles meet they coalesce. Arrows in bold signify coalescence events. Bold (red or blue)  lines indicate the presence of particles.} \label{fig:CRW}
\end{figure}

To construct the DLACS, we again start with one particle per site. We know that these particles are equally likely to be of types $A$ or $B$ (since $p=1/2$), but we will \textit{not} reveal these labels. Instead, observe that when two particle meet, they should coalesce if their types are the same ($AA$ or $BB$) and annihilate if their types are not the same ($AB$ or $BA$), and these happen with equal probability when two particles encounter each other for the first time. Thus, in our construction, when a particle jumps to a new location that already contains another particle, the two particles annihilate if the arrow is labeled \texttt{XOR}, and they coalesce if the arrow is labeled \texttt{OR}. In the former case, both particles are removed, and no longer interact. In the latter case, the two particles continue following arrows together as though they were one particle. Note that in this case, conditional on the particles coalescing, we know only that they were of the same type, but the coalesced particle is still equally likely to be of type $A$ or $B$. When a particle moves to a location that does not contain another particle, then it simply  moves there regardless of the label (\texttt{OR} or \texttt{XOR}) on the arrow. 

  \begin{figure}
   \centering 
   \includegraphics[width = .45\textwidth]{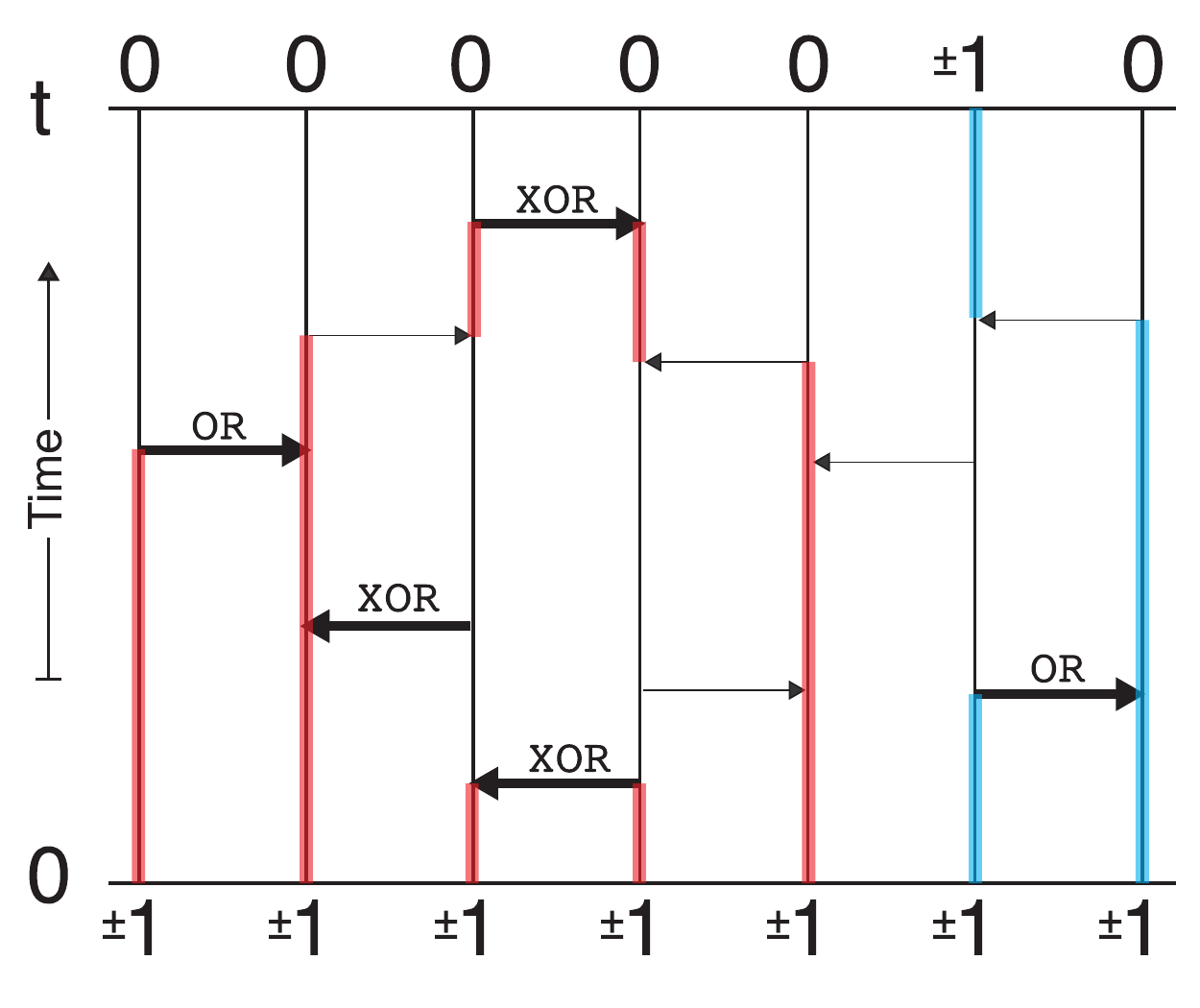} 
   \hfill
  \includegraphics[width = .45\textwidth]{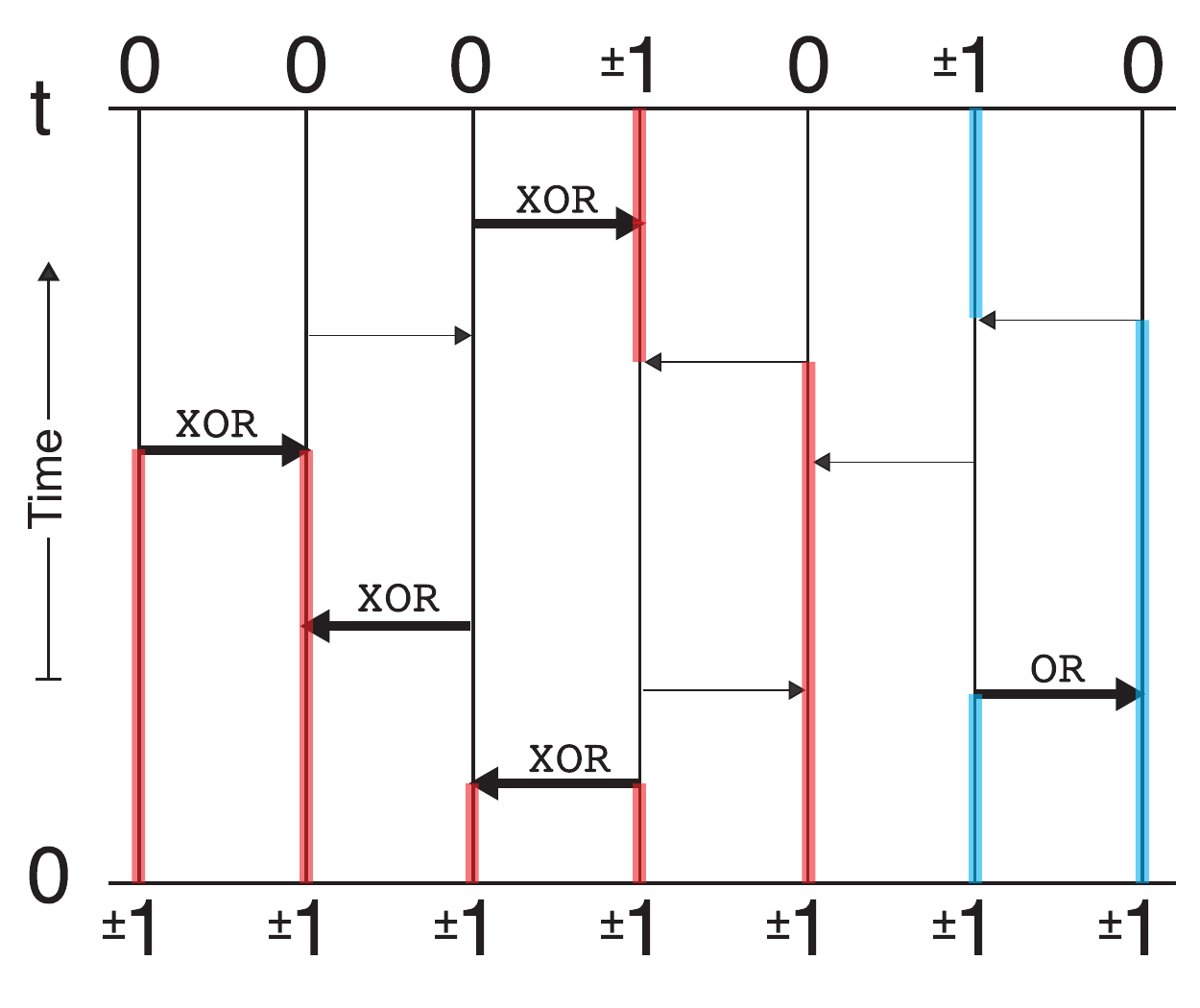}
   
   \caption{Graphical  constructions of the DLACS $\xi_t$ on a subset of $\mathbb{Z}$. The arrows are the same as in Figure~\ref{fig:CRW}, and only the labels of the bold arrows are shown, as the labels on the remaining arrows are irrelevant. Bold (red or blue)  lines indicate the presence of particles. The difference  between the  left and right realizations is the label  of  the leftmost arrow; note the effect of this label  on the presence of the particle at the  middle site.} \label{fig:DLACS}
\end{figure}

By considering finitely many particles together and tracking their trajectories, it follows by induction along the jump times that the types of the particles remaining at time $t$ are independent and equally likely to be $A$ or $B$. Thus, at a fixed time $t$, the types of the particles that remain are independent and equally likely to be  $A$ or $B$. 
This gives $\xi_t$ the correct marginal distribution. 

Conditional on $\{\zeta_t(\0) =1\}$, the trajectories followed by the coalesced particles at $\0$ induce a binary tree $\T_t$ whose vertices correspond to coalescence events. We let $\Ts_t$ be the labeled version of $\T_t$ in which each vertex is given the same label $\OR$ or $\XOR$ as the arrow from the graphical construction that induced the collision. We say that $\Ts_t$ is \emph{good} if the labels are such that, in this coupling, there is a particle present at $\0$ in DLACS. It follows from the construction and this definition that
\begin{align}
\P(\xi_t(\0) \neq 0) = \P(\Ts_t \text{ is good}) \P(\zeta_t(\0) =1).\label{eq:good}
\end{align}

  \begin{figure}
   \centering 
   \includegraphics[width = .43\textwidth]{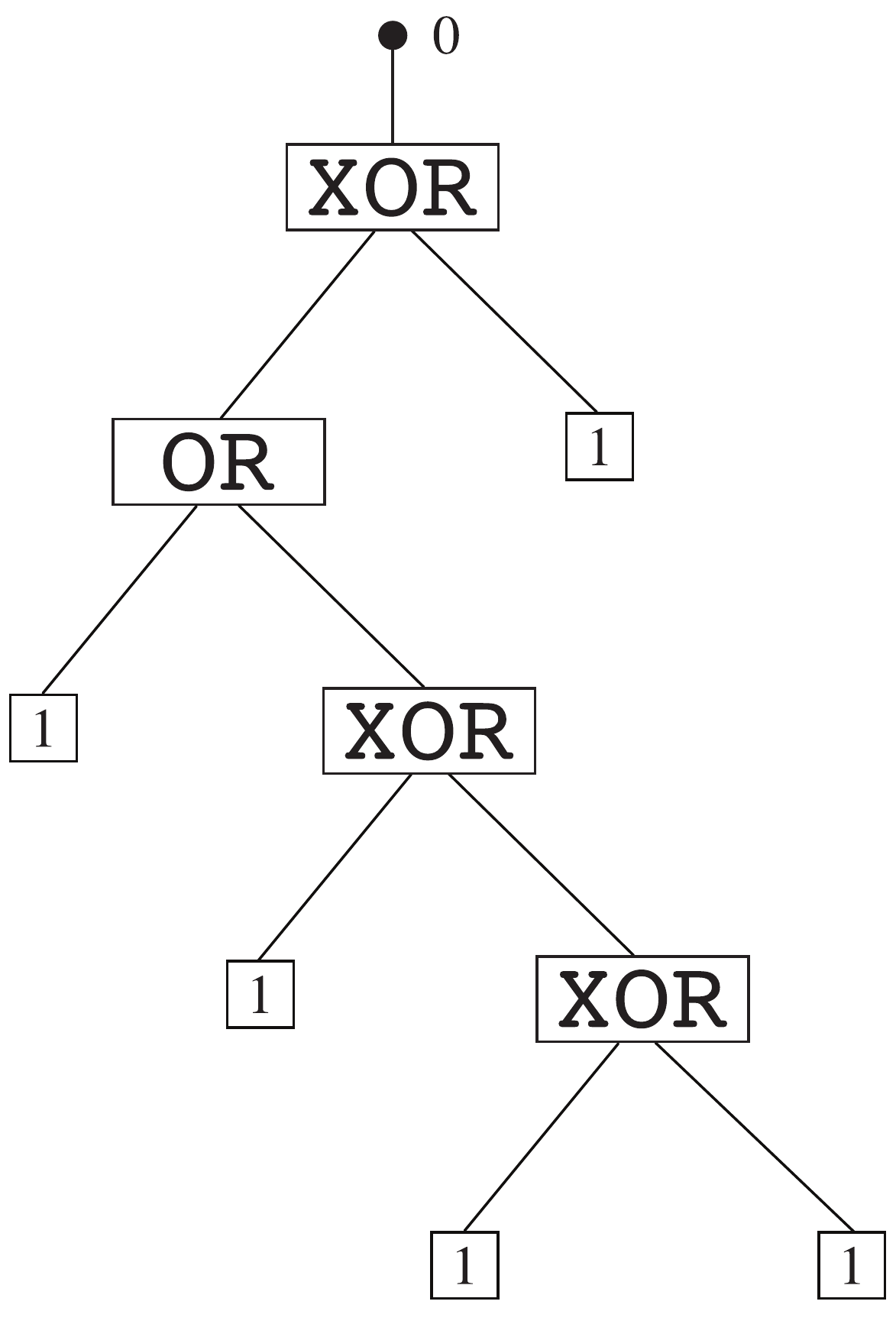} 
    \hfill
  \includegraphics[width = .43\textwidth]{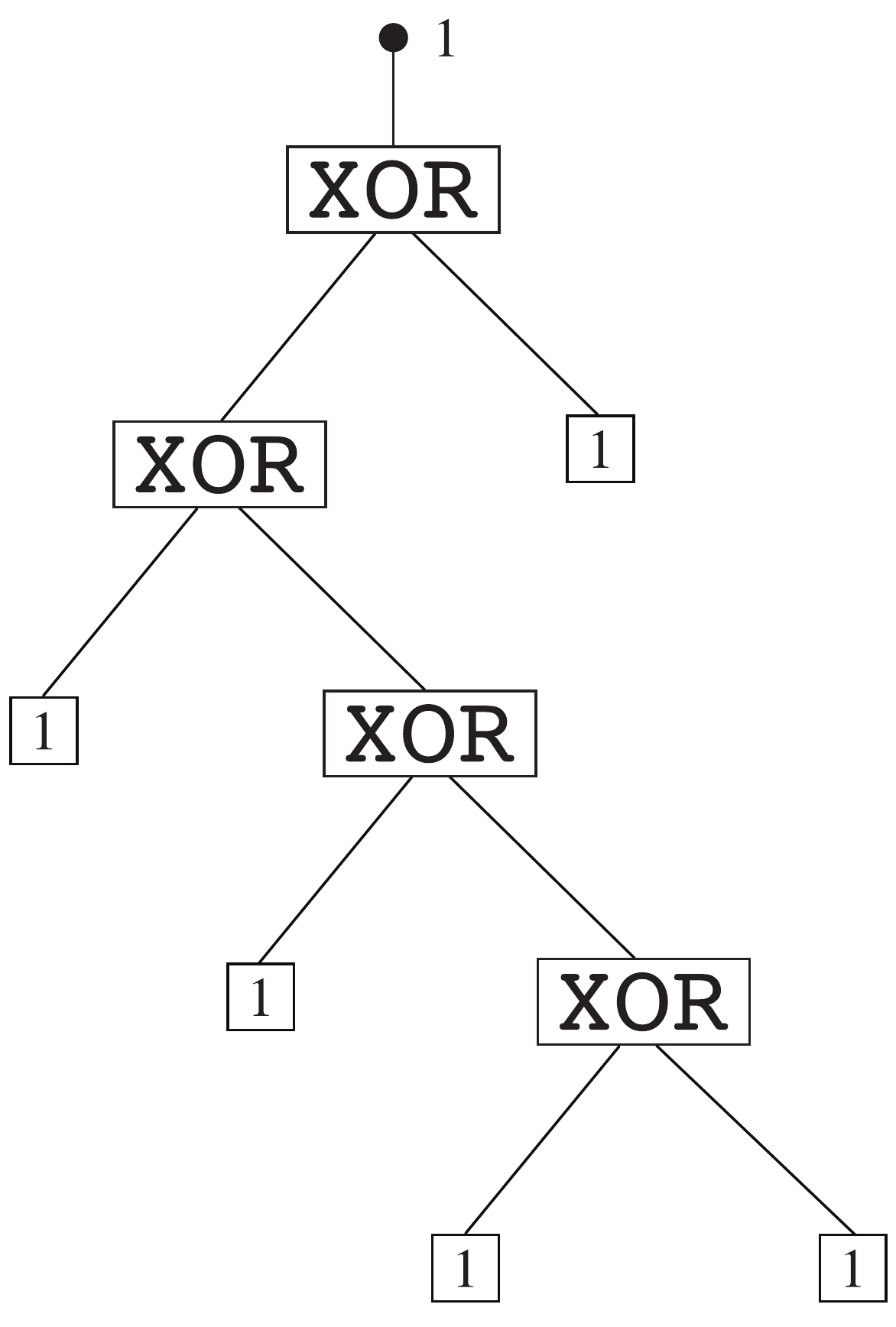}
   
   \caption{The dual trees $\Ts_t$ corresponding to the middle vertex (red component) in Figure~\ref{fig:CRW} with labels from Figure~\ref{fig:DLACS}. The nodes act as logical \texttt{OR} and \texttt{XOR} gates with inputs of $1=\texttt{True}$ at the leaves and output $0$ or $1$ at the root.} \label{fig:dual-tree}
\end{figure}

Let $|\Ts_t|$ be the number of leaves in $\Ts_t$. This corresponds to the number of coalesced particles at $\0$ at time $t$. First note that $\P(\Ts_t \text{ is good} \mid |\Ts_t| = 1) = 1$ since there are no collisions. Now suppose $|\Ts_t| = n\ge 2$. Following the trajectory of the particle at $\0$ in $\zeta_t$ backwards in time, there is a most recent time at which it coalesced with another particle, say at time $0<s\le t$ along an arrow from $u$ to $v$. There are two cases depending on the label of the arrow at $(u,v)$ crossed at time $s$. If the label is \texttt{OR} and there is at least one particle at $u$ or at $v$ in $\xi_s$, then there will be a particle at $\0$ in $\xi_t$. If the label is \texttt{XOR} and there is \textit{either} a particle at $u$ or a particle at $v$ (but not both) in $\xi_s$, then there will be a particle at $\0$ in $\xi_t$.

Suppose the conditional probability (given $|\Ts_t|$ and the history of $\zeta_t$) that there is a particle at $u$ in $\xi_s$ is $\alpha$ and the corresponding probability for $v$ is $\beta$. Observe that these events are conditionally independent, as they depend only on the labels of arrows along disjoint coalescing random walk histories. It follows that 
\begin{align}
\P(\Ts_t \text{ is good } \mid |\Ts_t| = n) &=\frac12[1 - (1-\alpha)(1-\beta)] + \frac12[\alpha(1-\beta)+\beta(1-\alpha)]\\
&= \alpha+\beta - \frac32\alpha\beta. \label{eq:ab}
\end{align}
Note that it is easy to check that this probability is minimized on $[1/2,1]^2$ when $\alpha=\beta=1$, which gives the lower bound 
\begin{align}
    \P( \Ts_t \text{ is good} \mid |\Ts_t| = n) \geq \f 12 \label{eq:12}
\end{align}
for all $n \geq 1$. This further implies that
\begin{align}
    \P( \Ts_t \text{ is good}) \geq \f 12.
\end{align}



\begin{lemma} \thlabel{lem:size}
    $|\P(\Ts_t \text{ is good} \mid |\Ts_t| \geq k ) - \f 23| \leq k^{-1}$. 
\end{lemma}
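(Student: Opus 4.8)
The plan is to prove a much stronger statement than required: the conditional goodness probability given $|\Ts_t| = n$ depends only on $n$, and its deviation from $\f23$ decays exponentially in $n$. For a fixed binary tree shape $T$ with root subtrees $L$ and $R$, the goodness indicators of $L$ and $R$ are independent, since they are determined by the labels on disjoint sets of internal nodes, and the root label is an independent uniform $\OR$/$\XOR$ mark. The reasoning leading to \eqref{eq:ab} then shows that the goodness probability $g(T) := \P(T \text{ is good})$ satisfies $g(T) = g(L) + g(R) - \f32 g(L)g(R)$, with $g = 1$ for a single leaf.

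First I would pass to the deviation $D(T) := g(T) - \f23$. Substituting $g(L) = \f23 + a$ and $g(R) = \f23 + b$ into the map $(\alpha,\beta) \mapsto \alpha + \beta - \f32\alpha\beta$ yields the value $\f23 - \f32 ab$, so the recursion collapses to the clean multiplicative form $D(T) = -\f32 D(L)D(R)$, with $D(\text{leaf}) = \f13$.

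Next I would show by induction on the number of leaves $n = |T|$ that $D(T) = (-1)^{n+1}/(3\cdot 2^{n-1})$; in particular $D(T)$ depends only on $n$ and not on the shape of $T$. The base case $n = 1$ is immediate. For the inductive step, if $L$ has $m$ leaves and $R$ has $n - m$ leaves, then $D(T) = -\f32 \cdot \frac{(-1)^{m+1}}{3\cdot 2^{m-1}} \cdot \frac{(-1)^{n-m+1}}{3\cdot 2^{n-m-1}}$, which simplifies to $(-1)^{n+1}/(3\cdot 2^{n-1})$ regardless of the split point $m$. Writing $a_n := \P(\Ts_t \text{ is good} \mid |\Ts_t| = n)$, this gives $a_n = \f23 + (-1)^{n+1}/(3\cdot 2^{n-1})$, and hence $|a_n - \f23| = 1/(3\cdot 2^{n-1})$ for every shape with $n$ leaves.

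Finally, conditioning on $\{|\Ts_t| \geq k\}$ expresses $\P(\Ts_t \text{ is good} \mid |\Ts_t| \geq k)$ as the average $\sum_{n \geq k} a_n \, \P(|\Ts_t| = n \mid |\Ts_t| \geq k)$. Subtracting $\f23$ and applying the triangle inequality bounds the left side in absolute value by $\max_{n \geq k} |a_n - \f23| = 1/(3\cdot 2^{k-1})$, and since $3\cdot 2^{k-1} \geq k$ for all $k \geq 1$ this is at most $k^{-1}$, which proves the lemma (with exponential decay, far stronger than the stated bound). The only point requiring care is the independence used to justify the product recursion, namely that the two subtree goodness events and the root label are mutually independent; but this is exactly the conditional independence already observed in deriving \eqref{eq:ab}, so I do not expect a genuine obstacle here, only careful bookkeeping of the signs in the induction.
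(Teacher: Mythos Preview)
Your proof is correct, and in fact sharper than the paper's. The paper does not solve the recursion exactly. Instead it argues that any binary tree with at least $k$ leaves has a leaf at depth $m\ge\log_2 k$, and then walks up the path from that leaf to the root: at each internal node the factorization $|q+r-\tfrac32 qr-\tfrac23|=|1-\tfrac32 r|\cdot|q-\tfrac23|$ together with the bound $r\in[1/2,1]$ from \eqref{eq:12} shows the deviation from $2/3$ contracts by a factor $\le 1/2$, yielding $|g(T)-\tfrac23|\le 2^{-m}\le k^{-1}$.

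Your route is different and gives more: by rewriting the recursion as $D(T)=-\tfrac32 D(L)D(R)$ you discover that $D$ depends only on the number of leaves, and you obtain the closed form $a_n=\tfrac23+(-1)^{n+1}/(3\cdot 2^{n-1})$. This is strictly stronger than what the paper proves (exponential rather than polynomial decay in $n$, and an exact value rather than a bound). The trade-off is that the paper's contraction argument would still work in settings where the adjoining probabilities $r$ are only known to lie in $[1/2,1]$ rather than to satisfy the same recursion; your exact computation relies on the full self-similarity of the tree labeling.
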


\begin{proof}
    Since $\Ts_t$ is a binary tree, if $|\Ts_t| \geq k$ then there is a leaf $v$ with distance $m \geq \log_2 k$ to the root of $\Ts_t$.
    Start at $v$ and work upwards. Suppose that the probability of a particle surviving after the $k$th gate is $q$ and at the $(k+1)$st gate, suppose the probability of there being a particle along the other (adjoining) path is $r$. Then the probability of a particle persisting after the $(k+1)$st gate is, by \eqref{eq:ab}, 
    $$q + r - \f 32 qr.$$ 
    This probability is closer to $2/3$ by a factor of at least $1/2$ than $q$. This follows because
$$\left |q+r - \f32 qr - \f 23\right| = \left |1- \f 32 r\right|\times \left|q - \f 23 \right|.$$
And since \eqref{eq:12} implies that $r \in [1/2,1]$, we must have $|1-3r/2| \leq 1/2$. Therefore, the probability $\hat p$ that the particle corresponding to $v$ is still present at the root, satisfies $|\hat p-2/3| < 2^{-m} \leq 2^{-\log_2 k} = k^{-1}$.
\end{proof}

\begin{lemma} \thlabel{lem:sizeT}
    Suppose that $G= \mathbb Z^d$ or is a transitive unimodular graph on which the simple random walk is transient. If $k>0$ and $\epsilon >0$ are fixed, then $\P(|\Ts_t| \leq k ) < \epsilon$ for all sufficiently large $t$. 
\end{lemma}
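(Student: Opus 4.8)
The plan is to recast the statement as one about cluster masses of the coalescing random walk $\zeta_t$, and to prove that, conditioned on occupancy, these masses diverge. Recall that $|\Ts_t|$ is exactly the size of the $\zeta_t$-cluster occupying $\0$ (the number of leaves of its coalescence tree), and that every probability attached to $\Ts_t$ is conditioned on $\{\zeta_t(\0)=1\}$. Writing $\mu_t(\0)$ for the size of the $\zeta_t$-cluster at $\0$ (set to $0$ when $\0$ is empty) and $\rho_t=\P(\zeta_t(\0)=1)$, the claim $\P(|\Ts_t|\le k)<\epsilon$ is precisely the statement that $\P(1\le\mu_t(\0)\le k)=o(\rho_t)$.

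First I would record the exact size-biasing identity already implicit in the proof of \thref{thm:VT}. Since $\zeta_t$ is the DLACS with $p=1$ and $M=N=\infty$, applying the mass transport of \thref{lemma:mass_transport} to $R(u,v,t,k)=\ind{L_t(\a_u)=v,\ \texttt{size}_t(\a_u)=k}$ gives $\P(\texttt{size}_t(\a_\0)=k)=k\,\P(\mu_t(\0)=k)$, where $\texttt{size}_t(\a_\0)$ is the size of the $\zeta_t$-cluster containing the walker started at $\0$ and we used that a site holds at most one cluster. Summing yields $\E[\mu_t(\0)]=1$ and, crucially,
\[
\rho_t=\P(\mu_t(\0)\ge 1)=\E\!\left[\frac{1}{\texttt{size}_t(\a_\0)}\right],\qquad \P(1\le\mu_t(\0)\le k)=\E\!\left[\frac{1}{\texttt{size}_t(\a_\0)}\,\ind{\texttt{size}_t(\a_\0)\le k}\right].
\]
Thus the lemma is equivalent to $\E[\texttt{size}_t(\a_\0)^{-1}\ind{\texttt{size}_t(\a_\0)\le k}]=o\bigl(\E[\texttt{size}_t(\a_\0)^{-1}]\bigr)$: the occupancy-conditional cluster size tends to $\infty$ in probability.

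The qualitative input is that $\texttt{size}_t(\a_\0)$ is nondecreasing in $t$ and tends to $\infty$ almost surely. Indeed, if a positive fraction $\delta_j:=\P(\texttt{size}_\infty(\a_\0)=j)$ of walkers belonged to clusters of finite eventual size $j$, then by transitivity such clusters would have density $\delta_j/j$ and would occupy a fixed site forever once they reach their final size, forcing $\liminf_t\rho_t\ge\sum_{j<\infty}\delta_j/j$; this contradicts $\rho_t\to 0$ (known from \eqref{eq:crwa} and \cite{hermon2022mean}). Hence $\delta_j=0$ for every finite $j$, giving $\texttt{size}_t(\a_\0)\uparrow\infty$ a.s., which already yields $\P(1\le\mu_t(\0)\le k)\to 0$ together with the crude bound $\P(\texttt{size}_t(\a_\0)\le k)\le k\rho_t$ (from $\rho_t=\E[\texttt{size}_t(\a_\0)^{-1}]\ge k^{-1}\P(\texttt{size}_t(\a_\0)\le k)$).

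The main obstacle is upgrading ``$\to 0$'' to the sharp rate ``$o(\rho_t)$''. Since conditioning on occupancy is itself an event of probability $\rho_t\to 0$, no moment estimate can suffice: a conditional atom of weight $c\rho_t$ at small sizes contributes $O(\rho_t)=o(1)$ to every moment of $\mu_t(\0)$ and is therefore invisible, so a genuinely dynamical argument is required. I would obtain the rate by a renewal/contraction argument across doubling time scales: a size-$\le k$ cluster present at time $t$ performs coalescing random walk on $[t,2t]$ and, by the asymptotics of $\rho_t$ and random walk intersection estimates, absorbs more than $k$ units of fresh mass in this window with probability bounded below, so that $\P(\mu_{2t}(\0)\le k)$ is a definite factor smaller than $\P(\mu_t(\0)\le k)$; this factor must then be shown to beat the ratio $\rho_{2t}/\rho_t$ (which tends to $1/2$ for transient graphs and for $\mathbb Z^d$ with $d\ge 3$, and to other explicit values in dimensions $1,2$). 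Iterating gives $\P(1\le\mu_t(\0)\le k)=o(\rho_t)$. The delicate point, and the technical heart of the argument, is that conditioning a cluster to remain small biases its environment to be locally depleted, lowering its coalescence rate; controlling this depletion --- equivalently, showing that the occupancy-conditional cluster-size distribution rescaled by its mean $1/\rho_t$ does not concentrate near $0$, as in the exponential clustering picture for coalescing random walk --- is exactly where the first-order asymptotics \eqref{eq:crwa} must be invoked.
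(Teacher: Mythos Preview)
Your reformulation via size-biasing is correct and the identity $\rho_t=\E[\texttt{size}_t(\a_\0)^{-1}]$ is a clean way to phrase the target, but the proof is not complete: the passage from ``$\texttt{size}_t(\a_\0)\to\infty$ a.s.'' to the sharp rate $\P(1\le\mu_t(\0)\le k)=o(\rho_t)$ is the entire content of the lemma, and you only sketch a doubling-scale renewal argument without carrying it out. You yourself flag the obstruction --- conditioning a cluster to stay small depletes its local environment --- and then say this is ``exactly where the first-order asymptotics must be invoked,'' without saying how. That step is not routine, and your proposed contraction inequality $\P(\mu_{2t}(\0)\le k)\le c\,\P(\mu_t(\0)\le k)$ with $c$ beating $\rho_{2t}/\rho_t$ is not established.

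The paper's proof avoids this entirely by citing the result rather than reproving it. Via voter model duality, $|\Ts_t|$ has the law of $n_t=|\zeta_t^{\0}|$ conditioned on $n_t>0$. Bramson--Griffeath (for $\mathbb Z^d$) and Hermon--Li--Yao--Zhang (for transitive unimodular transient graphs) prove both that $\E[n_t\mid n_t>0]\to\infty$ and that $n_t/\E[n_t\mid n_t>0]$ converges in law to an exponential; together these give $\P(n_t\le k\mid n_t>0)\to 0$ immediately. The ``exponential clustering picture'' you mention in your last sentence is precisely this theorem --- the point is that it is already in the literature and should be cited, not rederived from a renewal scheme.
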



\begin{proof}
Note that $|\Ts_t|$ has the same law as the number of coalesced particles at $\0$ conditional on the event that $\0$ is occupied. This has the same law as the number of sites $n_t$ at time $t$ with the opinion started from $\0$ in the voter model. To define the voter model, each vertex $x$ is assigned a unique number,
which we think of as its opinion. Then for any voters $x$ and $y$ sharing an edge, with rate $1$ the voter $y$ adopts
the opinion of $x$. Let $\zeta_t^{\0}$ be the set of vertices at time $t$ with the opinion initially held by $\0$. See \cite{benjamini2016site} or \cite{bramson1980asymptotics} for a definition of the voter model and description of the duality relationship. 

We start with the case $G = \mathbb Z^d$. \cite[Introduction and Theorem 1]{bramson1980asymptotics} give that for $G= \mathbb Z^d$ and any $\alpha \in (0,\infty)$ we have 
\begin{align}
\P(n_t \leq \alpha \E[n_t \mid n_t>0] \mid n_t >0) \to 1- e^{-\alpha} \text{ as $t \to \infty$}. \label{eq:exp}
\end{align}
\cite[Theorem 1.15]{hermon2022mean} and the same reasoning used to derive \cite[(19)]{bramson1980asymptotics} give that \eqref{eq:exp} holds when $G$ is a transitive unimodular graph on which random walk is transient.
Additionally, \cite[Theorem 1]{bramson1980asymptotics} and \cite[Theorem 1.12]{hermon2022mean} give that $\E[n_t\mid n_t>0] \to \infty$ for the $G$ in our hypotheses.
It follows that for $\alpha = -\log(1-\epsilon/2)$
$$\limsup_{t\to\infty} \P(|\Ts_t| \leq k ) = \limsup_{t\to\infty}\P(n_t \leq k \mid n_t >0) \leq 1- e^{-\alpha} = \epsilon/2,$$
so $\P(|\Ts_t| \leq k )< \epsilon$ for all large $t$.

\end{proof}

\begin{proof}[Proof of \thref{thm:crw}]
Using \eqref{eq:good}, it suffices to prove that $\P(\Ts_t \text{ is good}) \to 2/3$ as $t \to \infty.$ 
To this end, fix $\epsilon \in (0,1)$. Let $k>0$ be such that $k^{-1} < \epsilon$.  For any $t\geq 0$ we may write 
\begin{align}
\P(\Ts_t \text{ is good} ) &= \P( \Ts_t \text{ is good} \mid |\Ts_t| > k ) \P(|\Ts_t| > k ) \\
& \hspace{ 2 cm} + \P( \Ts_t \text{ is good} \mid |\Ts_t| \leq k ) \P(|\Ts_t| \leq k ). \label{eq:Tg}
\end{align}

By \thref{lem:size} and \thref{lem:sizeT}, the first term on the right of \eqref{eq:Tg} lies in the interval $\f 23 (1 - \epsilon) \pm \epsilon$
for all large $t$. Similarly, the second term on the right of \eqref{eq:Tg} lies in the interval $[0,\epsilon]$. Taking $\epsilon \to 0$, we see that
$\lim_{t \to \infty} \P(\Ts_t \text{ is good}) = 2/3$ as desired.  
\end{proof}

\bibliographystyle{amsalpha}  	
\bibliography{mybib}

\end{document}